\newcommand{\url}[1]{\texttt{#1}}
 \newcommand*{\double}[2][.1ex]{%
  \mathrel{\vcenter{\offinterlineskip%
  \hbox{$#2$}\vskip#1\hbox{$#2$}}}}
\newcommand{\txtand}{{\mathop{\text{~and~}}}}
\newcommand{\Bcal}{{\mathcal{B}}}
\newcommand{\Fcal}{{\mathcal{F}}}
\newcommand{\Gcal}{{\mathcal{G}}}
\newcommand{\Hcal}{{\mathcal{H}}}
\newcommand{\Ical}{{\mathcal{I}}}
\newcommand{\Mcal}{{\mathcal{M}}}
\newcommand{\Xcal}{{\mathcal{X}}}
\newcommand{\Fbb}{{\mathbb{F}}}
\newcommand{\Pbf}{{\mathbf{P}}}
\newcommand{\Tbf}{{\mathbf{T}}}
\newcommand{\restrict}{|}
\newcommand{\contract}{.}
\newcommand{\BS}{\backslash}
\newcommand{\rk}{\mathrm{rk}}
\newcommand{\cl}{\mathrm{cl}}
\newcommand{\Z}{\mathbb{Z}}
\renewcommand{\phi}{\varphi}
\newcommand{\maparrow}{\longrightarrow}
\newcommand{\SET}[1]{{\left\{ #1 \right\}}}
\newcommand{\dSET}[1]{{\left\{ #1 \right\}_{\neq}}}
\newcommand{\COMMENT}[1]{}
\newcommand{\XXXCUTXXX}[1]{}
\newcommand{\ROMANENUM}{\renewcommand{\theenumi}{(\roman{enumi})}\renewcommand{\labelenumi}{\theenumi}}
\newcommand{\routesto}{\double{\rightarrow}}
\newcommand{\deftext}[2][]{\emph{#2}}
\newcommand{\deftextX}[2][]{\emph{#2}}
\newcommand{\PRFR}[1]{\ignorespaces}
\newcommand{\bm}{\ignorespaces}
\def\titlerunning#1{\gdef\titrun{#1}}
\def\author#1{\gdef\autrun{\def\and{\unskip, }#1}\gdef\@author{#1}}
\def\address#1{{\def\and{\\\hspace*{18pt}}\renewcommand{\thefootnote}{}%
\footnote {#1}}%
\markboth{\autrun}{\titrun}}
\def\email#1{e-mail: #1}
\def\subjclass#1{{\renewcommand{\thefootnote}{}%
\footnote{\emph{Mathematics Subject Classification (2010):} #1}}}
\def\keywords#1{\par\medskip
\noindent\textbf{Keywords.} #1}
\newtheorem{theorem}{Theorem}[section]
\newtheorem{corollary}[theorem]{Corollary}
\newtheorem{lemma}[theorem]{Lemma}
\theoremstyle{definition}
\newtheorem{definition}[theorem]{Definition}
\newtheorem{remark}[theorem]{Remark}
\newtheorem{step}{Step}
\renewenvironment{cases}[1][l]{\matrix@check\cases\env@cases{#1}}{\endarray\right.}
\def\env@cases#1{%
  \let\@ifnextchar\new@ifnextchar
  \left\lbrace\def\arraystretch{1.2}%
  \array{@{}#1@{\quad}l@{}}}
\begin{document}


\titlerunning{Deciding Gammoid Class-Membership}

\title{Well-Scaling Procedure for Deciding Gammoid Class-Membership of Matroids}

\author{Immanuel Albrecht}

\date{\today}

\maketitle

\address{I.~Albrecht: 
FernUniversität in Hagen, Fakultät für Mathematik und Informatik, Lehrgebiet für Diskrete Mathematik und Optimierung, D-58084~Hagen, Germany;
\email{Immanuel.Albrecht@fernuni-hagen.de}
}

\subjclass{ 05B35, 05C20}


\begin{abstract}
We introduce a procedure that solves the decision problem whether a given matroid $M$ is a gammoid.
The procedure consists of three pieces: First, we introduce a notion of a valid matroid tableau which captures the current
state of knowledge regarding the properties of matroids related to the matroid under consideration. 
Second, we give a sufficient set of rules
that may be used to generate valid matroid tableaux. Third, we introduce a succession of steps that ultimately lead to a 
decisive tableau starting with any valid tableau. We argue that the decision problem scales well with respect to parallel computation models.

\keywords{matroids, gammoids, directed graphs, decision problem}
\end{abstract}

The \emph{Gammoid Class-Membership Problem} is the following decision problem: Given a matroid $M=(E,\Ical)$,
determine whether $M$ is a gammoid or not.
It is a well-known fact that the class of gammoids is closed under duality, minors, and direct sums; and that it may not be
characterized by a finite number of excluded minors.
D.~Mayhew even showed that every gammoid is a minor of some excluded minor of the class of gammoids \cite{Ma16},
therefore any attempt to solve this problem relying solely on excluded minors appears to be futile.
We introduce a decision procedure for Gammoid Class-Membership Problems that is guaranteed to ultimately give an answer through
exhaustive search, and which is also capable to incorporate knowledge of non-gammoids and strict gammoids in order to give an answer
before exhausting the search space in many cases.
Furthermore, the derivation steps described in our process may be carried out using massive parallelism,
since joining tableaux is a valid derivation.


\section{Preliminaries}

In this work, we consider \emph{matroids} to be pairs $M=(E,\Ical)$ where $E$ is a finite set 
and $\Ical$ is a system of
independent subsets of $E$ subject to the usual axioms (\cite{Ox11}, Sec.~1.1).
The family of bases of $M$ shall be denoted by $\Bcal(M)$,
the family of flats of $M$ shall be denoted by $\Fcal(M)$.
If $M=(E,\Ical)$ is a matroid and $X\subseteq E$, then the restriction of $M$ to $X$
shall be denoted by $M\restrict X$ (\cite{Ox11}, Sec.~1.3).
A matroid $N=(E',\Ical')$ is an \deftext{extension} of $M$, if $E\subseteq E'$ 
and $\Ical = \SET{X\in\Ical'~\middle|~ X\subseteq E}$ holds.
The \emph{dual matroid} of $M$ shall be denoted by $M^\ast$.
A \deftext{modular cut} of $M$ is a set $C\subseteq \Fcal(M)$ that is closed under super-flats and
under the intersection of pairs of modular flats.
H.H.~Crapo showed, that there is a one-to-one correspondence between single-element extensions of a matroid $M$
and its modular cuts \cite{C65}.

Furthermore, the notion of a \emph{digraph} shall be synonymous with what is described more 
precisely as \emph{finite simple directed graph} that may have some loops, i.e. a digraph is 
a pair $D=(V,A)$ where $V$ is a finite
set and $A\subseteq V\times V$. 
All standard notions related to digraphs in this work are in
accordance with the definitions found in \cite{BJG09}. A \emph{path} in $D=(V,A)$ is
a non-empty and non-repeating sequence $p = p_1 p_2 \ldots p_n$ of vertices $p_i\in V$ such that
for each $1 \leq i < n$, $(p_i,p_{i+1})\in A$. By convention, we shall denote $p_n$ by $p_{-1}$.
Furthermore, the set of vertices traversed by a path $p$ shall be denoted by $\left| p \right| = \SET{p_1,p_2,\ldots,p_n}$
and the set of all paths in $D$ shall be denoted by $\Pbf(D)$. For $D=(V,A)$ and $S,T\subseteq V$, an \emph{$S$-$T$-separator}
is a set $X\subseteq V$ such that every path $p\in\Pbf(D)$ from $s\in S$ to $t\in T$ has $\left| p \right| \cap V\not=\emptyset$.

\begin{definition}\PRFR{Jan 22nd}
  Let $D = (V,A)$ be a digraph, and $X,Y\subseteq V$. A \deftext{routing} from $X$ to $Y$ in $D$ is a family of paths $R\subseteq \Pbf(D)$ such that
  \begin{enumerate}\ROMANENUM
    \item for each $x\in X$ there is some $p\in R$ with $p_{1}=x$,
    \item for all $p\in R$ the end vertex $p_{-1}\in Y$, and
    \item for all $p,q\in R$, either $p=q$ or $\left|p\right|\cap \left|q\right| = \emptyset$.
  \end{enumerate}
  We shall write $R\colon X\routesto Y$ in $D$ as a shorthand for ``$R$ is a routing from $X$ to $Y$
    in $D$'', and if no confusion is possible, \label{n:routing}
    we just write $X\routesto Y$ instead of $R$ and $R\colon X\routesto Y$.
\end{definition}

\begin{definition}\label{def:gammoid}\PRFR{Jan 22nd}
    Let $D = (V,A)$ be a digraph, $E\subseteq V$,
    and $T\subseteq V$. 
    The \deftext[gammoid represented by DTE@gammoid represented by $(D,T,E)$]{gammoid represented by $\bm{(D,T,E)}$} is defined to be the matroid $\Gamma(D,T,E)=(E,\Ical)$\label{n:GTDE}
     where
    \[ \Ical = \SET{X\subseteq E \mid \text{there is a routing } X\routesto T \text{ in D}}. \]
    The elements of $T$ are usually called \deftextX{sinks} in this context, although they are not required to be actual sinks of the digraph $D$. To avoid confusion, 
    we shall call the elements of $T$ \deftext{targets} in this work. A matroid $M'=(E',\Ical')$ is called \deftextX{gammoid}, if there is a digraph $D'=(V',A')$ and a set $T'\subseteq V'$ such that $M' = \Gamma(D',T',E')$.
    A gammoid $M$ is called \deftext{strict}, if there is a representation $(D,T,E)$ of $M$ with $D=(V,A)$ where $V=E$.
\end{definition}

\begin{definition}\PRFR{Mar 29th}
  Let $M=(E,\Ical)$ be a matroid. Then $M$ shall be \deftext{strongly base-orderable}, if for every pair of bases
  $B_1,B_2\in\Bcal(M)$ there is a bijective map $\phi\colon B_1\maparrow B_2$ such that
  \( \left( B_1\BS X  \right) \cup \phi[X] \in \Bcal(M) \)
  holds
  for all $X\subseteq B_1$.
\end{definition}

\begin{lemma}[\cite{M72}, Corollary~4.1.4]\label{lem:GammoidsAreStronglyBaseOrderable}\PRFR{Mar 29th}
  Let $M=(E,\Ical)$ be a gammoid. Then $M$ is strongly base-orderable.
\end{lemma}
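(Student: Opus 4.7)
The plan is to prove the lemma in two stages: first for strict gammoids, where the routing representation gives direct combinatorial access, and then for arbitrary gammoids by the fact that every gammoid is a restriction of a strict gammoid.

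For the strict case, fix $M = \Gamma(D,T,V)$ and $B_1, B_2 \in \Bcal(M)$ with witnessing routings $R_i \colon B_i \routesto T$ that induce bijections $\sigma_i \colon B_i \to T$ sending each start vertex to its endpoint. Form the auxiliary multidigraph $H$ on $V$ whose arcs are those of $R_1$ together with the reversals of those of $R_2$. Each vertex carries at most one in-arc and one out-arc of each of the two types, so after choosing a transition system at each vertex that pairs $R_1$-arcs with reversed $R_2$-arcs, the arcs of $H$ decompose into edge-disjoint alternating walks and cycles whose walk-endpoints all lie in $B_1 \cup B_2$. The required bijection $\phi \colon B_1 \to B_2$ is obtained by letting $\phi$ pair the $B_1$- and $B_2$-endpoints of each walk, extended by the identity on $B_1 \cap B_2$. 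For $X \subseteq B_1$, a routing of $(B_1 \setminus X) \cup \phi[X]$ to $T$ is built from $R_1$ by augmentation along the walks of $H$ associated with the elements of $X$: each such augmentation replaces the source $x$ of an $R_1$-path by $\phi(x)$ and redirects intermediate arcs along the walk, possibly jumping between walks at shared vertices.

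For the general case, observe that $\Gamma(D,T,E) = \Gamma(D,T,V) \restrict E$, so it suffices to show that strong base-orderability is preserved under restriction. Given bases $B_1', B_2'$ of $M \restrict E'$, extend them to bases $B_i = B_i' \cup I_i$ of $M$, apply strong base-orderability of $M$ to obtain $\phi \colon B_1 \to B_2$, and then correct $\phi$ into a bijection $\psi \colon B_1' \to B_2'$ that still satisfies the swap property. The defect pairs $\{x \in B_1' : \phi(x) \in I_2\}$ and $\{y \in I_1 : \phi(y) \in B_2'\}$ have equal cardinality by a rank count, and $\phi$ is realigned along them via iterated swap-exchanges.

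The main obstacle is verifying that the augmentation procedure in Step 1 actually yields a routing whose paths are pairwise vertex-disjoint and whose endpoints exactly cover $T$. A priori, augmenting along several walks in sequence could create conflicts between newly routed paths and the remaining $R_1$-paths, so one must argue carefully, exploiting the edge-disjointness of walks in $H$ together with the internal vertex-disjointness of $R_1$ and $R_2$, that these conflicts can always be resolved. The correction step in Step 2 is comparatively routine bookkeeping once Step 1 is in place.
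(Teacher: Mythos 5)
The paper does not prove this lemma at all --- it is imported verbatim from Mason (\cite{M72}, Corollary~4.1.4) with ``For a proof, see \cite{M72}.'' So there is no internal argument to compare against; your proposal has to stand on its own as a proof of Mason's result, and as written it does not. The fatal point is the one you yourself flag at the end: the claim that augmenting $R_1$ along the walks of $H$ associated with an arbitrary subset $X\subseteq B_1$ yields a genuine routing $(B_1\BS X)\cup\phi[X]\routesto T$ is the entire content of strong base-orderability for strict gammoids, and it is not a bookkeeping issue. Unlike the bipartite-matching case (transversal matroids), where the components of $M_1\symdiff M_2$ are pairwise \emph{vertex}-disjoint and augmenting along any subset of them trivially produces a matching, here a single vertex $v$ can be an internal vertex of one $R_1$-path and of a \emph{different} $R_2$-path; it then carries two transitions and is visited twice by your walk decomposition, possibly by two distinct walks. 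Activating one of those walks but not the other produces two arc-disjoint but not vertex-disjoint paths through $v$. Repairing this requires an uncrossing argument, and the repair can change which element of $B_2$ ends up substituting for which element of $B_1$ --- so it is not even clear that the bijection $\phi$ you defined from the walk decomposition is the one that works. (A smaller symptom of the same looseness: walk-endpoints need not lie in $B_1\cup B_2$ under a purely alternating transition system, e.g.\ at an internal vertex of an $R_1$-path that meets no $R_2$-path.) Since you explicitly leave this step as ``one must argue carefully,'' the proof is incomplete precisely where it is hardest.

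Your Stage~2 is essentially sound: $\Gamma(D,T,E)=\Gamma(D,T,V)\restrict E$, one can arrange $B_i=B_i'\cup I$ with a \emph{common} complement $I$ (every element of $B_2'$ lies in $\cl_M(B_1')$, so extending $B_1'$ inside $B_2$ only picks up elements of $I$), and the standard fix is to define $\psi(x)=\phi^{k}(x)$ for the least $k\geq 1$ with $\phi^k(x)\in B_2'$, which is legitimate because $I\subseteq B_1\cap B_2$; this is the usual proof that strong base-orderability is closed under restriction. Given that, the efficient way to close the gap in Stage~1 is not to fight the digraph augmentation at all, but to follow the classical route: prove strong base-orderability for transversal matroids via the clean $M_1\symdiff M_2$ argument in the presenting bipartite graph, observe that the class of strongly base-orderable matroids is closed under duality, and invoke the Ingleton--Piff theorem that strict gammoids are exactly the duals of transversal matroids. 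That replaces the problematic simultaneous augmentation by two standard closure facts plus a cited structure theorem, which is essentially how Mason's Corollary~4.1.4 is obtained.
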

For a proof, see \cite{M72}.

\begin{definition}\label{def:alphaM}\PRFR{Jan 30th}
  Let $M=(E,\Ical)$ be a matroid. The \deftext[a-invariant of M@$\alpha$-invariant of $M$]{$\bm \alpha$-invariant of $\bm M$} shall be the map\label{n:alphaM}
  $ \alpha_M\colon 2^E \maparrow \Z $
  that is uniquely characterized by the recurrence relation
  \[ \alpha_M(X) = \left| X \right| - \rk_M(X) - \sum_{F\in \Fcal(M,X)} \alpha_M(F),\]
  where $\Fcal(M,X) = \SET{F\in\Fcal(M)~\middle|~ F\subsetneq X}$.
\end{definition}

\begin{theorem}[\cite{M72}, Theorems~2.2 and 2.4]\label{thm:strictGammoids}
  Let $M=(E,\Ical)$ be a matroid. Then $M$ is a strict gammoid if and only if $\alpha_M \geq 0$.
\end{theorem}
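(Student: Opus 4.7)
The plan is to establish both implications via the classical duality that $M$ is a strict gammoid if and only if $M^\ast$ is a transversal matroid, thereby reducing the statement to a presentation-theoretic characterisation of transversal matroids. A preliminary step is to invert the defining recurrence of the $\alpha$-invariant, by induction on the height in the flat lattice, to obtain the closed identity
\[
    |F| - \rk_M(F) = \sum_{G \in \Fcal(M),\, G \subseteq F} \alpha_M(G) \qquad \text{for every flat } F \text{ of } M,
\]
which exhibits $\alpha_M$ as the M\"obius-style inverse of the nullity on the flat lattice.

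For the forward direction, I would take a representation $M = \Gamma(D, T, E)$ with $V(D) = E$ and associate to each non-target vertex $v \in E \BS T$ the block $B_v = \SET{v} \cup N^+_D(v)$. A Menger-type argument shows that $(B_v)_{v \in E \BS T}$ is a presentation of $M^\ast$ as a transversal matroid, since routings $X \routesto T$ in $D$ correspond to systems of distinct representatives for the complementary blocks in $M^\ast$. For each flat $F$ of $M$, a counting step yields
\[
    |F| - \rk_M(F) = \left| \SETL{v \in E \BS T}{B_v \subseteq F} \right|.
\]
Substituting into the closed identity and M\"obius-inverting once more, $\alpha_M(F)$ then equals the number of blocks $B_v$ contained in $F$ but in no proper subflat of $F$, a count of a set and hence non-negative.

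For the reverse direction, assume $\alpha_M \geq 0$. For each flat $F$ of $M$, I would take exactly $\alpha_M(F)$ copies of $F$ as blocks, forming a multiset of subsets of $E$ whose total size is $\sum_{F} \alpha_M(F) = |E| - \rk(M) = \rk(M^\ast)$ by applying the closed identity at the top flat. I would then verify that the transversal matroid presented by this multiset equals $M^\ast$, whence $M$ is a strict gammoid by the duality. The main obstacle is this last verification: showing that the transversal matroid with the $\alpha$-presentation has the same rank function as $M^\ast$, equivalently that every circuit of $M^\ast$ obstructs a matching and vice versa. This is typically achieved via a Hall-style matching argument combined with induction over the flat lattice, and is where the bulk of the technical work is expected to lie.
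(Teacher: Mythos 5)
The paper itself offers no proof of this theorem --- it is quoted from Mason with a pointer to \cite{M72} --- so the only meaningful comparison is with the classical argument, and your outline does follow that classical route: the Ingleton--Piff duality between strict gammoids and transversal matroids, the presentation of $M^\ast$ by the blocks $B_v=\SET{v}\cup N^+_D(v)$ for $v\in E\BS T$, and the identification of $\alpha_M(F)$ with the number of blocks whose closure in $M$ is exactly $F$. The architecture is right, but as written this is an outline rather than a proof, and the omissions are not peripheral. In the converse direction, the one statement that carries all the content --- that the transversal matroid presented by $\alpha_M(F)$ copies of each flat $F$ really is $M^\ast$ --- is essentially the assertion of Mason's Theorem~2.4 itself; you defer it to ``a Hall-style matching argument combined with induction over the flat lattice'' and correctly identify it as the bulk of the work, which means the hard half of the theorem is not proved. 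In the forward direction you likewise assert without argument both that $(B_v)_{v\in E\BS T}$ presents $M^\ast$ (this needs the path--matching correspondence, plus a normalization so that $\left|T\right|=\rk_M(E)$ and paths can be taken not to pass through targets) and the counting identity $\left|F\right|-\rk_M(F)=\left|\SETR{v\in E\BS T}{B_v\subseteq F}\right|$; both are true, but they are precisely the two lemmas a proof must supply.

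There is also a quantifier mismatch to repair: $\alpha_M$ is defined on all of $2^E$ (Definition~\ref{def:alphaM}) and the theorem demands $\alpha_M(X)\geq 0$ for every subset $X$, whereas your forward argument only yields non-negativity on flats. The gap is fillable but needs saying. For a non-flat $X$ the recurrence gives $\alpha_M(X)=\left|X\right|-\rk_M(X)-\sum_{F\in\Fcal(M),\,F\subseteq X}\alpha_M(F)$, and by your closed identity the sum counts the blocks $B_v$ with $\cl_M(B_v)\subseteq X$, hence is at most $\left|\SETR{v}{B_v\subseteq X}\right|$. That last quantity is at most $\left|X\right|-\rk_M(X)$: every independent subset of $E\BS X$ in the transversal matroid $M^\ast$ is matched into blocks meeting $E\BS X$, of which there are $\left(\left|E\right|-\rk_M(E)\right)-\left|\SETR{v}{B_v\subseteq X}\right|$, while the dual rank formula gives $\rk_{M^\ast}(E\BS X)=\left|E\BS X\right|+\rk_M(X)-\rk_M(E)$; comparing the two yields the bound. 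None of this appears in your write-up, so as it stands the forward implication is established only for flats and the reverse implication is not established at all.
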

For a proof, see \cite{M72}.

\begin{theorem}[\cite{In77}, Theorem~13; \cite{Brylawski1971}, \cite{Brylawski1975}, \cite{Ingleton1971}]\label{thm:graphicGammoidsAreSeriesParallel}\PRFR{Mar 29th}
  Let $\Fbb_2$ be the two-elementary field, $E,C$ finite sets, and let $\mu \in \Fbb_2^{E\times C}$ be a matrix.
  Then the linear matroid $M(\mu)$ is a gammoid if and only if there is no minor $N$ of $M(\mu)$ which is isomorphic to $M(K_4)$.
  The latter is the case if and only if $M(\mu)$ is isomorphic to the polygon matroid of a series-parallel network.
\end{theorem}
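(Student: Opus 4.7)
The plan is to split the double biconditional into three pieces. The second biconditional (``no $M(K_4)$ minor if and only if $M(\mu)$ is the polygon matroid of a series-parallel network'') is a classical excluded-minor characterization that does not mention gammoids at all; I would simply invoke the results of Brylawski and Tutte cited in the statement and not reprove it. This leaves two implications to establish between gammoid-hood and absence of an $M(K_4)$ minor.

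For the direction ``$M(\mu)$ is a gammoid $\Rightarrow$ $M(\mu)$ has no $M(K_4)$ minor'', I would combine three facts: Lemma~\ref{lem:GammoidsAreStronglyBaseOrderable}; the closure of the gammoid class under minors (mentioned in the introduction); and the classical observation of Ingleton that $M(K_4)$ itself is not strongly base-orderable. The third fact is a short finite check: labelling the edges of $K_4$, one fixes two edge-disjoint spanning trees $B_1,B_2$ of $K_4$ and verifies, one by one, that each of the six bijections $B_1\maparrow B_2$ fails the strong base-ordering condition because some subset $X\subseteq B_1$ produces a triangle after exchange. Contrapositively, any matroid possessing an $M(K_4)$ minor fails to be strongly base-orderable and therefore cannot be a gammoid.

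For the reverse direction ``no $M(K_4)$ minor $\Rightarrow$ gammoid'' I would use the second equivalence already in hand, so it suffices to show that the polygon matroid of every series-parallel network is a gammoid. The cleanest route is to observe that series-parallel graphs have transversal polygon matroids, and that every transversal matroid is a gammoid directly from Definition~\ref{def:gammoid}: given a presentation $(A_i)_{i\in I}$ on ground set $E$, build a digraph on $E\disunion I$ with an arc from $e$ to $i$ whenever $e\in A_i$, set $T=I$, and observe that a routing $X\routesto T$ is precisely a system of distinct representatives for the subfamily indexed by the targets of $X$. Alternatively one may argue by induction on the series-parallel construction, showing that both series extensions (subdividing an arc in a gammoid representation) and parallel extensions (adjoining a new source vertex with a single arc into an existing vertex of a representation) produce valid gammoid representations of the resulting polygon matroid.

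The main obstacle is the last step: verifying that these digraph operations really produce the polygon matroid of the series-parallel extension. In the transversal-matroid approach, one needs the nontrivial fact that the polygon matroid of a series-parallel graph is transversal; in the inductive approach, one needs a careful routing-by-routing argument that the added vertex/arc correctly simulates the series or parallel edge in the graph. Either verification is routine but accounts for most of the actual work in the proof.
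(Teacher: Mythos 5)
Your outline is sound, but note that the paper does not actually prove this theorem: it states it as a result imported from the literature and points the reader to Brylawski's and Ingleton's papers for ``a sufficient set of implications which establish the equivalency.'' What you have written is essentially a faithful reconstruction of the argument contained in those references, so the comparison is between a citation and a proof sketch rather than between two proofs. Your decomposition is the standard one and each piece is correct: the finite check that $M(K_4)$ is not (strongly) base-orderable does work exactly as you describe (with two complementary spanning trees of $K_4$, three of the six bijections already fail on singletons and the remaining three fail on a pair, each time producing a triangle), and combined with Lemma~\ref{lem:GammoidsAreStronglyBaseOrderable} and minor-closedness of gammoids this gives the forward implication. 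Two points deserve emphasis. First, the ``classical excluded-minor characterization'' you invoke for the second biconditional must be the \emph{binary} version: a general matroid with no $M(K_4)$ minor need not be graphic at all (e.g.\ $U_{2,4}$), and the general series-parallel characterization excludes both $U_{2,4}$ and $M(K_4)$; the hypothesis $\mu\in\Fbb_2^{E\times C}$ is precisely what removes the $U_{2,4}$ obstruction, and one also needs that each of the excluded minors for graphicness among binary matroids ($F_7$, $F_7^\ast$, $M^\ast(K_5)$, $M^\ast(K_{3,3})$) itself contains an $M(K_4)$ minor. Second, as you concede, the real content of the reverse direction is the transversality (or, in the inductive variant, the gammoid-representability) of series-parallel polygon matroids; your bipartite-digraph argument correctly reduces ``transversal $\Rightarrow$ gammoid'' to Definition~\ref{def:gammoid}, but the claim that series-parallel graphs have transversal cycle matroids is itself one of the cited theorems, so in the end your proof, like the paper's, still rests on the literature for its two heaviest steps. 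That is entirely appropriate here; just be explicit about which facts you are proving and which you are importing.
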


 For proofs of a sufficient set of implications which establish the equivalency stated, refer to
 \cite{Brylawski1971}, \cite{Brylawski1975}, and \cite{Ingleton1971}.

 \begin{theorem}[\cite{Ox11}, Theorem~6.5.4]\label{thm:binaryMatroids}\PRFR{Mar 29th}
  Let $M=(E,\Ical)$ be a matroid. Then $M$ is isomorphic to the linear matroid $M(\mu)$ for some matrix $\mu \in \Fbb_2^{E\times C}$
  if and only if $M$ has no minor isomorphic to the uniform matroid $U_{2,4} = \left( E',\Ical' \right)$,
  where $E'= \dSET{a,b,c,d}$ and $\Ical' = \SET{\vphantom{A^A}X\subseteq E'~\middle|~ \left| X \right| \leq 2}$.
 \end{theorem}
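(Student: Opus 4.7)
The forward direction is elementary: the class of binary matroids is closed under minors, since a submatrix of a representing matrix over $\Fbb_2$ represents the restriction, while contraction corresponds to passing to the quotient by the column being contracted. It therefore suffices to show that $U_{2,4}$ itself is not binary. Any rank-$2$ binary matroid arises from the columns of a $2\times n$ matrix over $\Fbb_2$, and the number of pairwise non-parallel non-zero columns is at most $\left| \Fbb_2^2 \setminus \SET{0} \right| = 3$, whereas in $U_{2,4}$ all four ground-set elements are pairwise non-parallel.

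The reverse direction is Tutte's excluded minor theorem, which is substantially harder. My plan is to invoke Whitney's characterization of binarity, namely that $M$ is binary if and only if $\left| C \cap C^\ast \right|$ is even for every circuit $C$ and every cocircuit $C^\ast$ of $M$. Assuming this as a black box, I would argue the contrapositive: if $M$ is non-binary, then $M$ contains $U_{2,4}$ as a minor.

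The approach is to take a minor-minimal non-binary matroid $N$ and show $N \isomorph U_{2,4}$. By minimality, $N$ has no loops and no coloops (each could be deleted or contracted to yield a smaller non-binary matroid), and every proper minor of $N$ satisfies the Whitney parity condition. Fix a circuit $C$ and cocircuit $C^\ast$ of $N$ with $\left| C\cap C^\ast\right|$ odd. The strategy is to successively delete or contract each element of $E(N)\setminus(C\cup C^\ast)$ and then carefully peel off elements of $C\setminus C^\ast$ (by contraction) and of $C^\ast\setminus C$ (by deletion), arguing at each step that minimality forces the element already to be absent; this yields $E(N) = C = C^\ast$. Combined with the parity of $\left| C\cap C^\ast\right| = \left| E(N)\right|$ being odd and the absence of smaller non-binary minors, rank and corank of $N$ are each forced to equal $2$, leaving only $N \isomorph U_{2,4}$.

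The main obstacle is precisely this reduction to $\left| E(N)\right| = 4$ with $\rk(N) = \rk(N^\ast) = 2$: one needs to verify that each removal of an element from the ambient ground set can be performed while preserving both a circuit and a cocircuit whose intersection remains odd, using the minimality of $N$ to rule out the cases where the removal destroys this property. This is the technical content of Tutte's original argument, for which I would follow the detailed inductive proof in \cite{Ox11}, Theorem~6.5.4.
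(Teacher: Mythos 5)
The paper does not prove this statement at all; it is quoted as a classical result (Tutte's excluded-minor characterization of binary matroids) with the proof delegated to \cite{Ox11}, pp.~193f. Your treatment of the forward direction is correct and complete: binary matroids are minor-closed, and $U_{2,4}$ is not binary because a rank-$2$ binary matroid has at most $|\Fbb_2^2\setminus\SET{0}|=3$ pairwise non-parallel nonzero points. Since you ultimately defer the converse to the same reference the paper cites, your proposal is, in effect, the paper's own treatment plus a genuine proof of the easy half.

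However, the sketch you give for the converse should not be mistaken for a workable argument, because its stated reduction target is impossible. If $E(N)=C=C^\ast$ with $C$ a circuit and $C^\ast$ a cocircuit, then $\rk(N)=|E(N)|-1$ and $\rk(N^\ast)=|E(N)|-1$, and $\rk(N)+\rk(N^\ast)=|E(N)|$ forces $|E(N)|=2$; moreover your two conclusions ``$|C\cap C^\ast|=|E(N)|$ is odd'' and ``$\rk(N)=\rk(N^\ast)=2$, hence $N\isomorph U_{2,4}$'' contradict each other, since $U_{2,4}$ has four elements. In $U_{2,4}$ the odd circuit--cocircuit intersection is realized by a $3$-element set that is simultaneously a circuit and a cocircuit, with one element of the ground set left over, so the correct reduction lands on $|C\cap C^\ast|=3$ with $C\cup C^\ast$ possibly proper in $E(N)$, not on $E(N)=C=C^\ast$. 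If you want to carry this out rather than cite it, follow the actual inductive argument in \cite{Ox11} (or the circuit--cocircuit parity route via Theorem~9.1.2 there); as written, your outline would fail at the final step.
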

  See \cite{Ox11}, pp.193f, for a proof.

\begin{definition}\label{def:deflateOfM}\PRFR{Mar 27th}
  Let $M=(E,\Ical)$ be a matroid, $X\subseteq E$. The restriction $N=M\restrict X$ shall be a \deftext[deflate of a matroid]{deflate of $\bm M$},
  if $E\BS X = \dSET{e_1,e_2,\ldots,e_m}$ can be ordered naturally,
  such that for all $i\in \SET{1,2,\ldots,m}$ the modular cut
  \[ C_i = \SET{F\in \Fcal\left( \vphantom{A^A} M\restrict \left( X\cup\SET{e_1,e_2,\ldots,e_{i-1}} \right)  \right)~\middle|~ e_i \in \cl_M(F)} \]
  has precisely one $\subseteq$-minimal element.
  $M$ shall be called \deftext{deflated}, if the only deflate of $M$ is $M$ itself.
\end{definition}

\begin{lemma}\label{lem:deflationLemma}\PRFR{Mar 27th}
  Let $M=(E,\Ical)$ be a matroid, $X\subseteq E$ and let  $N = M\restrict X$ be a deflate of $M$.
  Then $M$ is a gammoid if and only if $N$ is a gammoid.
\end{lemma}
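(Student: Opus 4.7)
The plan is to prove the two directions separately. The forward implication is immediate: since gammoids are closed under restriction (a minor operation, as noted in the introduction), the restriction $N = M\restrict X$ inherits being a gammoid from $M$. For the converse, I would proceed by induction on $i \in \{0,1,\ldots,m\}$, showing that each $M_i := M\restrict(X \cup \{e_1,\ldots,e_i\})$ is a gammoid, with base case $M_0 = N$ supplied by hypothesis.

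For the inductive step, suppose $M_{i-1}$ has a gammoid representation $(D,T,E_{i-1})$ with $D=(V,A)$, and let $F_i$ denote the unique $\subseteq$-minimum of $C_i$. The key construction is to adjoin a fresh vertex $e_i$ together with arcs $e_i \to v$ for every $v \in F_i$, obtaining $D' := (V \cup \{e_i\},\, A \cup \SET{(e_i,v) \mid v \in F_i})$, and then to set $M' := \Gamma(D', T, E_{i-1} \cup \{e_i\})$. Since the new arcs only emanate from $e_i$, routings staying within $E_{i-1}$ are unchanged, so $M'\restrict E_{i-1} = M_{i-1}$; hence $M'$ is a single-element extension of $M_{i-1}$. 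By Crapo's correspondence between such extensions and modular cuts, it then suffices to show that the modular cut associated to $M'$ equals $C_i$, equivalently that for every flat $G \in \Fcal(M_{i-1})$ one has $e_i \in \cl_{M'}(G) \iff F_i \subseteq G$ (the latter characterizing membership in $C_i$ via its unique minimum).

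Fixing a basis $B$ of $G$ in $M_{i-1}$, the ``$\Leftarrow$'' direction is the delicate one: a hypothetical routing $\{e_i\} \cup B \routesto T$ in $D'$ must begin with an arc $e_i \to v$ where $v \in F_i$, and vertex-disjointness with the $B$-paths forces $v \notin B$; stripping the initial $e_i$ then yields a routing $B \cup \{v\} \routesto T$ in $D$, contradicting $v \in F_i \subseteq G = \cl_{M_{i-1}}(B)$. The ``$\Rightarrow$'' direction goes by contrapositive: if $F_i \not\subseteq G$, pick any $v \in F_i \setminus G$, so $B \cup \{v\}$ is independent in $M_{i-1}$; prepending $e_i \to v$ to the $v$-path in any routing $B \cup \{v\} \routesto T$ in $D$ produces a routing of $\{e_i\} \cup B$ in $D'$, witnessing $e_i \notin \cl_{M'}(G)$.

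The main obstacle will be the vertex-disjointness bookkeeping in the ``$\Leftarrow$'' argument: one has to rule out that the second vertex of the $e_i$-path coincides with the start of some $B$-path, since only then does stripping the prefix $e_i \to$ honestly produce an enlarged vertex-disjoint routing in $D$. Once this is in place, Crapo's theorem identifies $M'$ with $M_i$, and iterating up to $i=m$ exhibits $M = M_m$ as a gammoid, completing the converse.
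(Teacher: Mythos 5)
Your proposal is correct and follows essentially the same route as the paper: induction on the removed elements, reducing to a single-element step, and representing the extension by adjoining a fresh vertex $e_i$ with arcs into the unique minimal flat $F_i$ of the modular cut $C_i$. The only difference is that you spell out, via Crapo's correspondence and the routing/disjointness argument, the verification that the paper dismisses as ``easy to see,'' and you do so correctly.
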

\begin{proof}\PRFR{Mar 27th}
  If $M$ is a gammoid, then $N$ is a gammoid, since the class of gammoids is closed under minors (\cite{M72}, Sec.~1 and Cor.~4.1.3).
  Now let $N$ be a gammoid, and let $E\BS X = \dSET{e_1,e_2,\ldots,e_m}$ be implicitly ordered with the properties required in Definition~\ref{def:deflateOfM}.
  We proof the statement of this lemma by induction on $\left| E\BS X \right| = m$.
  The base case $m = 0$ is trivial, the induction step follows from the special case where $E\BS X = \SET{e_1}$.
  Let $F_1 = \bigcap C_1$ be the unique minimal element of the modular cut $C_1$. Then $M$ arises from $N$ by adding a new point $e_1$
  to $N$, which is in general position with respect to the flat $F_1$. Let $(D,T,X)$ be a representation of $N$ with $D=(V,A)$ and $e_1\notin V$.
  Let $D' = \left( V\cup \SET{e_1}, A\cup \left( \SET{e_1}\times F_1 \right)  \right)$. It is easy to see that
  $(D',T,E)$ is a representation of $M$.
\end{proof}

\begin{theorem}[\cite{KW12}, Theorem~3]\label{thm:upperBoundSizeOfV}\PRFR{Mar 7th}
  Let $D=(V,A)$ be a digraph, $E,T\subseteq V$, and $r>0$ be the cardinality of a minimal $E$-$T$-separator in $D$.
  There is a set $Z\subseteq V$ with $E\cup T \subseteq Z$ and
   $\left| Z \right| = O(\left| E \right|\cdot\left| T \right|\cdot r)$
  such that for all $X\subseteq E$ and $Y\subseteq T$ there is a minimal $X$-$Y$-separator $S$ in $D$ with $S\subseteq Z$.
\end{theorem}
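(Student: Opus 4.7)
My plan is a direct \emph{pigeonhole} construction: pick a minimum $x$-$y$-separator $S_{x,y}$ in $D$ for each ordered pair $(x,y) \in E \times T$, take $Z$ to be the union of $E$, $T$, and all these separators, and show that for every $X \subseteq E$, $Y \subseteq T$ the sub-union $\bigcup_{(x,y) \in X \times Y} S_{x,y}$ is already an $X$-$Y$-separator contained in $Z$, from which a minimal one is extracted greedily.

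For the size bound, every $E$-$T$-separator separates each $x \in E$ from each $y \in T$, so Menger's theorem gives a minimum $x$-$y$-separator of cardinality $r_{x,y} \leq r$ for each pair. Fix any such $S_{x,y}$, using the conventions $S_{x,y} = \emptyset$ when no $x$-$y$-path exists in $D$ and $S_{x,x} = \SET{x}$ when $x=y$. Defining
\[ Z = E \cup T \cup \bigcup_{(x,y) \in E \times T} S_{x,y}, \]
one obtains $\left|Z\right| \leq \left|E\right| + \left|T\right| + \left|E\right|\cdot\left|T\right|\cdot r = O(\left|E\right|\cdot\left|T\right|\cdot r)$.

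For the covering property, fix $X \subseteq E$ and $Y \subseteq T$ and set $U_{X,Y} = \bigcup_{(x,y) \in X \times Y} S_{x,y} \subseteq Z$. Any path $p \in \Pbf(D)$ from some $x \in X$ to some $y \in Y$ is in particular an $x$-$y$-path and therefore meets $S_{x,y}$, hence meets $U_{X,Y}$; so $U_{X,Y}$ is itself an $X$-$Y$-separator. Greedily deleting vertices from $U_{X,Y}$ while preserving the separator property then produces the required inclusion-minimal $X$-$Y$-separator $S \subseteq U_{X,Y} \subseteq Z$.

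The main subtlety I anticipate is handling the boundary cases — such as $x = y$ when $E \cap T \neq \emptyset$, or $x$ adjacent to $y$ forcing an endpoint into every $x$-$y$-separator — all of which are absorbed by the conventions above together with the inclusion $E \cup T \subseteq Z$. Should ``minimal'' in the conclusion of the theorem instead refer to minimum cardinality, this pigeonhole argument would no longer suffice, because $U_{X,Y}$ need not contain a minimum-cardinality $X$-$Y$-separator; one would then have to select \emph{canonical} minimum separators (for instance the unique $x$-closest one) and invoke a submodular uncrossing lemma to migrate each vertex of the closest minimum $X$-$Y$-separator into some $S_{x,y}$, and that uncrossing would become the principal technical step.
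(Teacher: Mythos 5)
Your construction proves a weaker statement than the one the paper needs, and the weakening is fatal. In this paper ``minimal separator'' must be read as \emph{minimum-cardinality} separator: the hypothesis ``let $r>0$ be the cardinality of a minimal $E$-$T$-separator'' only defines a single number $r$ under that reading, and the sole use of the theorem, Corollary~\ref{cor:NbrOfV}, requires it. There one passes from $(D,T,E)$ to the shrunken digraph $D'$ on $Z$ and must show that the Menger value of every pair $(X,T)$ is preserved; every $X$-$T$-separator of $D'$ is one of $D$, and the reverse inequality needs a \emph{minimum} $X$-$T$-separator of $D$ lying inside $Z$. An inclusion-minimal separator inside $Z$ gives no bound on the minimum separator size in $D'$, so the matroid $\Gamma(D',T,E)$ could be strictly smaller than $\Gamma(D,T,E)$.

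Your own closing caveat is exactly where the proof fails: the union $U_{X,Y}=\bigcup_{(x,y)\in X\times Y}S_{x,y}$ of pairwise minimum separators need not contain any minimum $X$-$Y$-separator. For a concrete failure take $X=\SET{x_1,x_2}$, $Y=\SET{y}$, with arcs $x_1\to a\to c\to y$ and $x_2\to b\to c\to y$, and choose $S_{x_1,y}=\SET{a}$, $S_{x_2,y}=\SET{b}$; then the minimum $X$-$Y$-separator has size $1$ (e.g.\ $\SET{c}$), but the only $X$-$Y$-separator inside $U_{X,Y}=\SET{a,b}$ has size $2$. The part of your argument that does go through (the size bound $\left|Z\right|\leq \left|E\right|+\left|T\right|+r\cdot\left|E\right|\cdot\left|T\right|$ and the inclusion $E\cup T\subseteq Z$) matches the constant the paper extracts, but the covering property genuinely requires the machinery you deferred: canonical (closest) minimum separators and the submodular uncrossing argument of \cite{Marx09}, Lemma~4.1, which is what \cite{KW12} uses. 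Note also that the paper itself does not reprove the theorem --- it cites \cite{KW12} and only inspects that proof to make the hidden constant explicit --- so a self-contained proof here would have to carry out that uncrossing step rather than the pigeonhole shortcut.
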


\PRFR{Mar 7th}
\noindent
For the proof, see \cite{KW12}, where the authors only give the $O$-behavior of the size of $Z$ in Theorem~\ref{thm:upperBoundSizeOfV},
   but it is possible to derive the factor hidden in the $O$-notation by inspecting their proof and the proof of
   \cite{Marx09} Lemma~4.1.
  We obtain that $E\cup T \subseteq Z$ and
   \[ \left| Z \right| \leq \binom{r}{1} \cdot \binom{\left| E \right|}{1}  \cdot \binom{\left| T \right|}{1} + \left| E \right| + \left| T \right| = r\cdot \left| E \right|\cdot \left| T \right| + \left| E \right| + \left| T \right|. \]
  Let $(D,T,E)$ be a representation of $M$ where $\left| T \right| = \rk_M(E)$ and $D=(V,A)$.
  Let $Z\subseteq V$ be a subset of $V$ as in the consequent 
  of Theorem~\ref{thm:upperBoundSizeOfV}.
  Let $D'=(Z,A')$ be the digraph,
  where for all $x,y\in Z$, there is an arc
  \[ (x,y)\in A' \quad\Longleftrightarrow\quad \exists p\in \Pbf(D; x,y) \colon\, \left| p \right|\cap Z = \SET{x,y}.\]
  Thus there is an arc leaving $y\in Z$ and entering $z\in Z$ in $D'$ if there is a path from $y$ to $z$ in $D$ that never visits
  another vertex of $Z$. It is routine to show that $(D',T,E)$ represents the same matroid as $(D,T,E)$. Therefore we obtain:
  \begin{corollary}\label{cor:NbrOfV}
  Let $M=(E,\Ical)$ be a gammoid.
  There is a representation $(D,T,E)$ of $M$ where $D=(V,A)$ such that $\left| T \right| = \rk_M(E)$
  and such that $ \left| V \right| \leq \rk_M(E)^2 \cdot \left| E \right| + \rk_M(E) + \left| E \right| $.
  \end{corollary}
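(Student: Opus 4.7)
The plan is to carry out the construction outlined in the paragraph preceding the corollary and verify that it produces a representation of the same matroid with the desired size bound.

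First, I would argue that $M$ admits a representation $(D,T,E)$ with $\lvert T\rvert = \rk_M(E)$. Since $M$ is a gammoid, some representation $(D_0,T_0,E)$ exists; by standard Mengerian reasoning, a maximum routing $E\routesto T_0$ uses exactly $\rk_M(E)$ endpoints in $T_0$, and one can restrict $T_0$ to those endpoints without changing the resulting matroid (routings from independent sets stay intact, and no new independent sets can be created). With $\lvert T\rvert = \rk_M(E)$, Menger's theorem guarantees that the minimum $E$-$T$-separator in $D=(V,A)$ has cardinality $r=\rk_M(E)$.

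Next, I would apply Theorem~\ref{thm:upperBoundSizeOfV} to $(D,E,T)$ to obtain a set $Z\subseteq V$ with $E\cup T\subseteq Z$, satisfying the separator-catching property, and whose size is bounded by
\[ \lvert Z\rvert \;\leq\; r\cdot\lvert E\rvert\cdot\lvert T\rvert + \lvert E\rvert + \lvert T\rvert \;=\; \rk_M(E)^2\cdot\lvert E\rvert + \lvert E\rvert + \rk_M(E), \]
which is exactly the bound claimed. Then I would define $D'=(Z,A')$ as in the excerpt: an arc $(x,y)$ lies in $A'$ precisely when some path from $x$ to $y$ in $D$ meets $Z$ only at its two endpoints.

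The main content is verifying $\Gamma(D',T,E)=\Gamma(D,T,E)$. I would reduce to ranks: for every $X\subseteq E$, the rank of $X$ equals the minimum $X$-$T$-separator size (Menger), so it suffices to prove these agree in $D$ and $D'$. The key intermediate claim is that for any $S\subseteq Z$, $S$ is an $X$-$T$-separator in $D$ if and only if $S$ is an $X$-$T$-separator in $D'$. One direction: given a path $p$ in $D$ from $x\in X$ to $t\in T$, list its $Z$-visits in order as $z_0=x,z_1,\ldots,z_k=t$; each consecutive pair $(z_j,z_{j+1})$ is an arc of $D'$ by construction, so these form a path in $D'$ which must meet $S$, and that meeting point lies on $p$. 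Conversely, given a path $p'=p'_1\cdots p'_n$ in $D'$, replace each arc by a witnessing $D$-path whose interior avoids $Z$; the resulting walk reduces to a $D$-path whose $Z$-vertices are a subset of $\{p'_1,\ldots,p'_n\}$, so any intersection with $S\subseteq Z$ occurs at some $p'_j$. Combining this equivalence with the Theorem~\ref{thm:upperBoundSizeOfV} guarantee that some minimum $X$-$T$-separator lies inside $Z$ yields equality of the minimum separator sizes, hence equal ranks, hence the same matroid.

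The main obstacle is the ``conversely'' direction of the separator equivalence, where one has to be careful that the concatenation of witnessing paths in $D$ is a walk rather than a path, because distinct witnesses may share interior vertices in $V\setminus Z$; reducing the walk to a simple path and observing that its $Z$-vertices remain a subset of the $p'_j$'s is where the construction of $A'$ (arcs witnessed by paths meeting $Z$ only at endpoints) is used essentially. Everything else is arithmetic on the bound supplied by Theorem~\ref{thm:upperBoundSizeOfV}.
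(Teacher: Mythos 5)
Your proposal follows exactly the paper's own route: the corollary is derived in the paragraph preceding it by taking a representation with $\left| T \right| = \rk_M(E)$, applying Theorem~\ref{thm:upperBoundSizeOfV} to obtain $Z$, and passing to the contracted digraph $D'=(Z,A')$, with the verification that $(D',T,E)$ represents the same matroid left as ``routine.'' You correctly supply that routine verification (the two-directional separator equivalence via Menger) and the justification for $\left| T \right| = \rk_M(E)$, so your argument is correct and essentially identical to the paper's, just more detailed.
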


\section{Matroid Tableaux}
\begin{definition}\PRFR{Mar 29th}
  A \deftext{matroid tableau} is a tuple \label{n:mattab} $\Tbf = (G,\Gcal,\Mcal,\Xcal,\simeq)$ where
  \begin{enumerate}\ROMANENUM
    \item $G$ is a matroid, called the \deftextX{goal} of $\bm \Tbf$,
    \item $\Gcal$ is a family of matroids, called the \deftextX{gammoids} of $\bm \Tbf$,
    \item $\Mcal$ is a family of matroids, called the \deftextX{intermediates} of $\bm \Tbf$,
    \item $\Xcal$ is a family of matroids, called the \deftextX{excluded matroids} of $\bm \Tbf$, and where
    \item $\simeq$ is an equivalence relation on $\SET{G'~\middle|~ G'\text{~is a minor of~}G}\cup\Gcal \cup \Mcal \cup \Xcal$, called the \deftextX{equivalence} of $\bm \Tbf$. \qedhere
  \end{enumerate}
\end{definition}

\needspace{5\baselineskip}
\begin{definition}\label{def:validTableau}\PRFR{Mar 29th}
  Let $\Tbf = (G,\Gcal,\Mcal,\Xcal,\simeq)$ be a matroid tableau.
  $\Tbf$ shall be \deftext[valid matroid tableau]{valid},
  \begin{enumerate}\ROMANENUM
  \item  if
  all matroids in $\Gcal$ are indeed gammoids,
  \item if no matroid in $\Mcal$ is a strict gammoid,
  \item if all matroids in $\Xcal$ are indeed matroids which are not gammoids, and
  \item  if for every equivalence classes $[M]_\simeq$ of $\simeq$ we have that either $[M]_\simeq$ is fully contained in the class of gammoids
  or $[M]_\simeq$ does not contain a gammoid. \qedhere
\end{enumerate}
\end{definition}

\begin{definition}\label{def:decisiveTableau}\PRFR{Mar 29th}
  Let $\Tbf = (G,\Gcal,\Mcal,\Xcal,\simeq)$ be a matroid tableau. $\Tbf$ shall be \deftext[decisive matroid tableau]{decisive},
  if $\Tbf$ is valid and 
  if either of the following holds:
  \begin{enumerate}\ROMANENUM 
  \item There is a matroid $M\in \Gcal$ such that $G \simeq M$.
  \item There is 
  a matroid $X\in \Xcal$ that is isomorphic to a minor of $G$.
  \item For every extension $N=(E',\Ical')$ of $G=(E,\Ical)$ 
  with $$\left| E' \right| = \rk_G(E)^2\cdot \left| E \right| + \rk_G(E) + \left| E \right|$$ there is a matroid $M\in\Mcal$ that is
  isomorphic to $N$. \qedhere
\end{enumerate}
\end{definition}

\needspace{3\baselineskip}
\begin{lemma}\label{lem:decisiveTableau}\PRFR{Mar 29th}
  Let $\Tbf = (G,\Gcal,\Mcal,\Xcal,\simeq)$ be a decisive matroid tableau. Then $G$ is a gammoid if and only if there is a matroid $M\in \Gcal$
  such that $G\simeq M$.
\end{lemma}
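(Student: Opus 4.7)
The plan is to prove the two implications separately, and to dispatch the reverse direction by ruling out cases (ii) and (iii) of Definition~\ref{def:decisiveTableau}, leaving only (i).

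For the easy direction, suppose there is $M\in\Gcal$ with $G\simeq M$. By validity condition (i), $M$ is a gammoid. The equivalence class $[M]_\simeq$ thus contains a gammoid, so by validity condition (iv), $[M]_\simeq$ lies entirely inside the class of gammoids. Since $G\simeq M$, also $G\in[M]_\simeq$, hence $G$ is a gammoid.

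For the reverse direction, assume $G$ is a gammoid. Since $\Tbf$ is decisive, at least one of (i), (ii), (iii) from Definition~\ref{def:decisiveTableau} holds; I will show (ii) and (iii) cannot, so (i) must. If (ii) held, there would be $X\in\Xcal$ isomorphic to a minor of $G$. But the class of gammoids is closed under minors and under isomorphism, so $X$ would be a gammoid, contradicting validity condition (iii) which states that every matroid in $\Xcal$ fails to be a gammoid.

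To rule out (iii), I would use Corollary~\ref{cor:NbrOfV} to pick a representation $(D,T,E)$ of $G$ with $D=(V,A)$, $|T|=\rk_G(E)$, and
\[ |V| \leq \rk_G(E)^2\cdot |E| + \rk_G(E) + |E| =: n. \]
If $|V|<n$, enlarge $D$ by adding $n-|V|$ new isolated vertices, obtaining a digraph $D'=(V',A)$ with $|V'|=n$ and $V\subseteq V'$; the extra vertices become loops in the resulting gammoid. Set $N = \Gamma(D',T,V')$. Then $N$ is a strict gammoid on $n$ elements, and since restricting $(D',T,V')$ to $E\subseteq V'$ still yields $\Gamma(D,T,E)=G$, the matroid $N$ is an extension of $G$ with $|V'|=n$. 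If case (iii) held, there would be $M\in\Mcal$ isomorphic to $N$; but being a strict gammoid is invariant under isomorphism, so $M$ would be a strict gammoid, contradicting validity condition (ii). Hence (iii) fails, and only (i) remains, which gives $M\in\Gcal$ with $G\simeq M$ as required.

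The main technical point is the extension construction in the argument against (iii): one must verify that appending isolated vertices to the representation of a gammoid still yields a representation (of the strict gammoid $\Gamma(D',T,V')$) and that this new matroid genuinely extends $G$ in the sense of the definition of extension given earlier. Both are immediate from Definition~\ref{def:gammoid} and the observation that the added vertices contribute only loops, so every independent set of $N$ that lies inside $E$ is exactly an independent set of $G$.
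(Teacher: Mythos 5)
Your proof is correct and follows essentially the same route as the paper: the forward direction via validity conditions (i) and (iv), and the reverse direction via Corollary~\ref{cor:NbrOfV} plus padding the representation up to exactly $n$ elements to produce a strict-gammoid extension that contradicts case (iii). The only cosmetic differences are that you argue by ruling out (ii) and (iii) rather than by assuming (i) fails, and that you pad with isolated vertices in the digraph where the paper takes a direct sum with a rank-zero matroid --- these yield the same matroid.
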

\begin{proof}\PRFR{Mar 29th}
  Assume that such an $M\in \Gcal$ exists. From Definition~\ref{def:validTableau}
  we obtain that $M$ is a gammoid, and that in this case $G\simeq M$ implies that $G$ is a gammoid, too.
  Now assume that no $M\in \Gcal$ has the property $G\simeq M$. Since $\Tbf$ is decisive, either case {\em (ii)} or {\em (iii)} of
  Definition~\ref{def:decisiveTableau} holds. If case {\em (ii)} holds, then $G$ cannot be a gammoid since it has a non-gammoid minor,
  but the class of gammoids is closed under minors.
  If case {\em (iii)} holds but not case {\em (ii)}, then no extension of $G=(E,\Ical)$ with $k=\rk_G(E)^2\cdot \left| E \right| $ $+\, \rk_G(E) + \left| E \right|$
  elements is a strict gammoid. Now assume that $G$ is a gammoid, then there is a digraph $D=(V,A)$ with $\left| V \right| \leq k$ vertices, such that
  $G = \Gamma(D,T,E)$
  for some $T\subseteq V$ (Corollary~\ref{cor:NbrOfV}). 
  Let $N' = \Gamma(D,T,V)\oplus (V',\SET{\emptyset})$ with $V'\cap V = \emptyset$ and $\left| V' \right| + \left| V \right| = k$. 
  Clearly, $N'$ is an extension of $G$ on a ground set with $k$ elements, which is also a strict gammoid, a contradiction to the assumption that $N'$ is 
  isomorphic to some $N\in \Mcal$, since $\Mcal$ is a family which consists of matroids that are not strict gammoids. Therefore we may conclude that in case {\em (iii)} the matroid $G$ is not a gammoid.
\end{proof}

\section{Valid Derivations}

\PRFR{Mar 29th}
 A \deftext{derivation} is an operation on a finite number of input tableaux and possible additional parameters with constraints
that produces an output tableau. Furthermore,
a derivation is \deftext[valid derivation]{valid}, if the output tableau is valid for all sets of valid input tableaux and possible additional parameters that
satisfy the constraints.

\begin{definition}\PRFR{Mar 29th}
  Let $\Tbf_i = (G_i,\Gcal_i,\Mcal_i,\Xcal_i,\simeq^{(i)})$ be matroid tableaux for $i\in \SET{1,2,\ldots,n}$.
  The \deftext{joint tableau} shall be the matroid tableaux \label{n:jointTableau}
  \[\bigcup_{i=1}^{n} \Tbf_i = (G_1,\Gcal,\Mcal,\Xcal,\simeq)\]
  where \[ \Gcal = \bigcup_{i=1}^n \Gcal_i,\,\,\, \Mcal = \bigcup_{i=1}^n \Mcal_i,\,\,\, \Xcal = \bigcup_{i=1}^n \Xcal_i, \]
  and where $\simeq$ is the smallest equivalence relation such that $M \simeq^{(i)} N$ implies $M \simeq N$ for all $i\in \SET{1,2,\ldots,n}$.
  In other words, $\simeq$ is the equivalence relation on the family of matroids
   $\SET{G'~\middle|~ G'\text{~is a minor of~}G}\cup\Gcal \cup \Mcal \cup \Xcal$ which is
  generated by the relations $ \simeq^{(1)}, \simeq^{(2)}, \ldots, \simeq^{(n)}$.
\end{definition}

\needspace{2\baselineskip}
\begin{lemma}\PRFR{Mar 29th}
  The derivation of the joint tableau is valid.
\end{lemma}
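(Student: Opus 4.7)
The plan is to verify each of the four conditions in Definition~\ref{def:validTableau} for the joint tableau $\bigcup_{i=1}^n \Tbf_i = (G_1, \Gcal, \Mcal, \Xcal, \simeq)$, assuming each input $\Tbf_i$ is valid.

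Conditions (i), (ii), and (iii) are essentially bookkeeping. Since $\Gcal = \bigcup_i \Gcal_i$, every matroid in $\Gcal$ lies in some $\Gcal_i$, and by validity of $\Tbf_i$ is a gammoid; the same argument (applied to non-strict-gammoid status and non-gammoid status) handles $\Mcal$ and $\Xcal$.

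The substantive condition is (iv). Here I would fix an equivalence class $[M]_\simeq$ and show that its gammoid-status is uniform. Given $N \in [M]_\simeq$, the construction of $\simeq$ as the equivalence generated by the $\simeq^{(i)}$ guarantees a finite chain
\[ M = M_0, M_1, \ldots, M_k = N \]
with the property that, for every $j \in \SET{1,\ldots,k}$, there exists an index $i_j \in \SET{1,\ldots,n}$ with $M_{j-1} \simeq^{(i_j)} M_j$. By the validity of $\Tbf_{i_j}$, the $\simeq^{(i_j)}$-equivalence class of $M_{j-1}$ is either fully contained in the class of gammoids or contains no gammoid at all; in either case $M_{j-1}$ and $M_j$ share the same gammoid-status. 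A trivial induction on $k$ then gives that $M$ is a gammoid if and only if $N$ is. Since $M$ and $N$ were arbitrary representatives of $[M]_\simeq$, the class is uniform, which is exactly condition (iv).

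There is no real obstacle; the only point that could conceivably cause trouble is whether each intermediate $M_j$ actually lies in the domain on which $\simeq$ is defined, namely $\SET{G'~\middle|~ G'\text{ is a minor of }G_1}\cup\Gcal \cup \Mcal \cup \Xcal$. However, each step $M_{j-1} \simeq^{(i_j)} M_j$ is a relation inside $\Tbf_{i_j}$, so both $M_{j-1}$ and $M_j$ lie in $\SET{G'~\middle|~ G'\text{ is a minor of }G_{i_j}}\cup\Gcal_{i_j} \cup \Mcal_{i_j} \cup \Xcal_{i_j}$. The validity argument only needs the gammoid-status equivalence within $\Tbf_{i_j}$, which holds regardless of whether the intermediate matroids also appear as minors of $G_1$. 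Thus the chain argument is unaffected, and the joint derivation is valid.
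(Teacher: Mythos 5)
Your proof is correct and follows essentially the same route as the paper's: conditions (i)--(iii) are inherited directly from the input tableaux, and condition (iv) is established by a chain argument through the generating relations $\simeq^{(i)}$, using validity of each $\Tbf_{i_j}$ to propagate gammoid-status along the chain. Your closing remark about the domain of the intermediate matroids is a detail the paper leaves implicit, but it does not alter the argument.
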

\begin{proof}\PRFR{Mar 29th}
  Clearly, $\Gcal$, $\Mcal$, and $\Xcal$ inherit their desired properties of Definition~\ref{def:validTableau} from
  the valid input tableaux $\Tbf_i$  where $i\in\SET{1,2,\ldots, n}$. Now let $M \simeq N$ with $M\not= N$.
  Then there are matroids $M_1,M_2,\ldots,M_{k}$ and indexes $i_0,i_1,\ldots,i_k\in \SET{1,2,\ldots,n}$ such that
  there is a chain of $\simeq^{(i)}$-relations
  \[ M \simeq^{(i_0)} M_1 \simeq^{(i_1)} M_2 \simeq^{(i_2)} \cdots \simeq^{(i_{k-1})} M_k \simeq^{(i_k)} N. \]
  The assumption that the input tableaux are valid yields that $M$ is a gammoid if and only if $M_1$ is a gammoid,
  if and only if $M_2$ is a gammoid, and so on. Therefore it follows that $M$ is a gammoid if and only if $N$ is a gammoid,
  thus $\simeq$ has the desired property of Definition~\ref{def:validTableau}. Consequently, $\bigcup_{i=1}^n \Tbf_i$ is a valid tableau.
\end{proof}

\begin{definition}\PRFR{Mar 29th}
  Let $\Tbf = (G,\Gcal,\Mcal,\Xcal,\simeq)$ and $\Tbf' = (G,\Gcal',\Mcal',\Xcal',\simeq')$ be matroid tableaux.
  We say that $\Tbf$ is a \deftext[sub-tableau]{sub-tableau of $\Tbf\bm'$} if $\Gcal \subseteq \Gcal'$, $\Mcal \subseteq \Mcal'$, and
  $\Xcal \subseteq \Xcal'$ holds, and if $M \simeq N$ implies $M \simeq' N$.
\end{definition}

\needspace{2\baselineskip}
\begin{lemma}\PRFR{Mar 29th}
  The derivation of a sub-tableau is valid.
\end{lemma}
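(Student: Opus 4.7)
The plan is to verify each of the four clauses of Definition~\ref{def:validTableau} for $\Tbf$, using the corresponding clause for $\Tbf'$ together with the three set-inclusions $\Gcal\subseteq\Gcal'$, $\Mcal\subseteq\Mcal'$, $\Xcal\subseteq\Xcal'$, and the implication $M\simeq N \Rightarrow M \simeq' N$.

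First I would dispatch clauses (i)--(iii) in one line each: every $M\in\Gcal$ lies in $\Gcal'$, so by validity of $\Tbf'$ it is a gammoid; every $M\in\Mcal$ lies in $\Mcal'$, so it is not a strict gammoid; every $M\in\Xcal$ lies in $\Xcal'$, so it is a matroid that is not a gammoid. These are immediate from the inclusions and require no real argument.

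The only step with any content is clause (iv). Let $[M]_\simeq$ be an equivalence class of $\simeq$ (which is an equivalence relation on the set $\SET{G'\mid G'\text{ is a minor of }G}\cup\Gcal\cup\Mcal\cup\Xcal$, which is itself a subset of the analogous carrier for $\simeq'$). Since $M\simeq N$ implies $M\simeq' N$, we have $[M]_\simeq \subseteq [M]_{\simeq'}$. By validity of $\Tbf'$, the class $[M]_{\simeq'}$ is either fully contained in the class of gammoids or disjoint from it, and the same dichotomy is inherited by the subset $[M]_\simeq$.

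Combining the four points yields that $\Tbf$ satisfies all conditions of Definition~\ref{def:validTableau}, so it is valid whenever $\Tbf'$ is valid. I do not anticipate any genuine obstacle; the content of the lemma is really the observation that each validity condition is monotone under the sub-tableau relation, and the only subtle point worth stating explicitly is that equivalence classes of $\simeq$ embed into equivalence classes of $\simeq'$, which is exactly what the implication $M\simeq N \Rightarrow M\simeq' N$ guarantees.
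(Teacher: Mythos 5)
Your proposal is correct and matches the paper's argument, which simply states that $\Tbf$ inherits the properties of Definition~\ref{def:validTableau} from the validity of $\Tbf'$; you have merely spelled out the routine details, including the one mildly non-trivial point that each $\simeq$-class sits inside a $\simeq'$-class and so inherits the gammoid/non-gammoid dichotomy.
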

\begin{proof}\PRFR{Mar 29th}
  Clearly $\Tbf$ inherits the properties of Definition~\ref{def:validTableau} from the validity of $\Tbf'$.
\end{proof}

\begin{definition}\PRFR{Mar 29th}
  Let $\Tbf = (G,\Gcal,\Mcal,\Xcal,\simeq)$  be a matroid tableau. We shall call the
  \label{n:expTab}
   tableau $[\Tbf]_\simeq= (G,\Gcal',\Mcal,\Xcal',\simeq)$   \deftext[expansion tableau]{expansion tableau of $\Tbf$} 
  whenever \[ \Gcal' = \bigcup_{M\in\Gcal} [M]_\simeq \quad\txtand\quad \Xcal' = \bigcup_{M\in\Xcal} [M]_\simeq. \qedhere\]
\end{definition}

\needspace{2\baselineskip}
\begin{lemma}\PRFR{Mar 29th}
  The derivation of the expansion tableau is valid.
\end{lemma}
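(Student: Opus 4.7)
The plan is to verify each of the four clauses of Definition~\ref{def:validTableau} for $[\Tbf]_\simeq = (G,\Gcal',\Mcal,\Xcal',\simeq)$, given that all four clauses hold for $\Tbf=(G,\Gcal,\Mcal,\Xcal,\simeq)$. Two clauses come essentially for free: clause \emph{(ii)} is inherited because the intermediates family $\Mcal$ is literally unchanged in the expansion, and clause \emph{(iv)} is inherited because the equivalence relation $\simeq$ itself is unchanged, so its equivalence classes are the same and therefore already satisfy the gammoid/non-gammoid dichotomy.

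The remaining clauses \emph{(i)} and \emph{(iii)} follow by a single symmetric argument that exploits clause \emph{(iv)} of $\Tbf$'s validity. For \emph{(i)}, I take any $N \in \Gcal'$; by construction there is some $M \in \Gcal$ with $N \simeq M$. Validity of $\Tbf$ yields that $M$ is a gammoid, so the class $[M]_\simeq$ contains a gammoid; by clause \emph{(iv)} applied to $\Tbf$, then $[M]_\simeq$ is fully contained in the class of gammoids, hence $N$ is a gammoid. For \emph{(iii)}, I take any $N \in \Xcal'$; again $N \simeq M$ for some $M \in \Xcal$, which by validity of $\Tbf$ is not a gammoid, so $[M]_\simeq$ contains a non-gammoid and therefore, by the contrapositive of clause \emph{(iv)} for $\Tbf$, contains no gammoid at all, whence $N$ is not a gammoid.

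There is no real obstacle here; the only thing to be careful about is that the equivalence relation $\simeq$ in $[\Tbf]_\simeq$ is declared to be the same relation as in $\Tbf$, so that the classes $[M]_\simeq$ used to define $\Gcal'$ and $\Xcal'$ are exactly the classes whose dichotomy property is guaranteed by validity of $\Tbf$. Once that is noted, the two one-line implications above close the proof.

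\begin{proof}
Clause \emph{(ii)} of Definition~\ref{def:validTableau} holds for $[\Tbf]_\simeq$ because its intermediates family equals $\Mcal$, which satisfies \emph{(ii)} by validity of $\Tbf$. Clause \emph{(iv)} holds because $[\Tbf]_\simeq$ and $\Tbf$ share the same equivalence relation $\simeq$. For \emph{(i)}, let $N\in\Gcal'$; then $N\simeq M$ for some $M\in\Gcal$. Validity of $\Tbf$ gives that $M$ is a gammoid, so $[M]_\simeq$ contains a gammoid, and clause \emph{(iv)} of $\Tbf$ forces every element of $[M]_\simeq$ to be a gammoid; in particular $N$ is a gammoid. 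For \emph{(iii)}, let $N\in\Xcal'$; then $N\simeq M$ for some $M\in\Xcal$, so $M$ is not a gammoid, and clause \emph{(iv)} of $\Tbf$ forces no element of $[M]_\simeq$ to be a gammoid; in particular $N$ is not a gammoid. Hence $[\Tbf]_\simeq$ is valid.
\end{proof}
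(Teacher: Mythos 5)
Your proof is correct and follows essentially the same route as the paper's: the paper likewise observes that each $M'\in\Gcal'$ (resp.\ $\Xcal'$) is $\simeq$-equivalent to some $M\in\Gcal$ (resp.\ $\Xcal$) and invokes clause \emph{(iv)} of validity to transfer the gammoid/non-gammoid property, leaving the unchanged components $\Mcal$ and $\simeq$ implicit. Your version is merely a bit more explicit about checking all four clauses.
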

\begin{proof}\PRFR{Mar 29th}
  If $M'\in \Gcal'$, then there is some $M\in \Gcal$ such that $M\simeq M'$. Since we assume $\Tbf$ to be valid, we may infer that
  $M'$ is a gammoid if and only if $M$ is a gammoid, and the latter is the case since $M\in\Gcal$. Therefore $M'$ is a gammoid.
  An analogous argument yields that if $M'\in \Xcal'$, then $M'$ is not a gammoid.
\end{proof}

\needspace{5\baselineskip}
\begin{definition}\PRFR{Mar 29th}
  Let $\Tbf = (G,\Gcal,\Mcal,\Xcal,\simeq)$  be a matroid tableau. We shall call the
  \label{n:extTab}
   tableau $[\Tbf]_{\equiv} = (G,\Gcal',\Mcal',\Xcal',\simeq')$ \deftext[extended tableau]{extended tableau of $\bm \Tbf$}
   whenever $$\Gcal' = \Gcal \cup \SET{M^\ast~\middle|~M\in \Gcal},\,\,\,
        \Xcal' = \Xcal \cup \SET{M^\ast~\middle|~M\in \Xcal},\,\,\, 
     \Mcal' = \Mcal \cup \Xcal',$$ and when 
     $\simeq'$ is the smallest equivalence relation that contains the 
     relations $\simeq$ and $\sim$; where $M\sim N$ if and only if $N$ is
     isomorphic to $M$ or $M^\ast$.
\end{definition}

\needspace{2\baselineskip}
\begin{lemma}\PRFR{Mar 29th}
  The derivation of the extended tableau is valid.
\end{lemma}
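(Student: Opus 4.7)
The plan is to verify each of the four conditions of Definition~\ref{def:validTableau} for $[\Tbf]_\equiv = (G,\Gcal',\Mcal',\Xcal',\simeq')$, using only that $\Tbf$ is valid together with the well-known closure of the class of gammoids under duality and isomorphism.

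First I would handle $\Gcal'$ and $\Xcal'$. Any $M\in\Gcal$ is a gammoid by validity of $\Tbf$, and any $M^\ast$ with $M\in\Gcal$ is a gammoid because the class of gammoids is closed under duality; hence every matroid in $\Gcal'$ is a gammoid, giving condition (i). Symmetrically, any $M\in\Xcal$ is a non-gammoid by validity of $\Tbf$, and if $M\in\Xcal$ then $M^\ast$ must also be a non-gammoid, since otherwise $M=(M^\ast)^\ast$ would be a gammoid by the same duality closure; this proves condition (iii) for $\Xcal'$.

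Next, for condition (ii), I note that $\Mcal' = \Mcal\cup\Xcal'$. Elements of $\Mcal$ are already not strict gammoids by validity of $\Tbf$, and elements of $\Xcal'$ are not even gammoids by the previous paragraph, hence in particular not strict gammoids.

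Finally, for condition (iv), I would mimic the chain argument used for the joint tableau. Suppose $M\simeq' N$ with $M\neq N$; then by construction of $\simeq'$ as the smallest equivalence generated by $\simeq$ and $\sim$, there is a finite chain of matroids $M = M_0, M_1, \ldots, M_k = N$ in which each consecutive pair is related either by $\simeq$ or by $\sim$. In the first case, validity of $\Tbf$ ensures that $M_i$ is a gammoid iff $M_{i+1}$ is; in the second case, $M_{i+1}$ is isomorphic to $M_i$ or to $M_i^\ast$, and the class of gammoids is closed under both isomorphism and duality, so again the gammoid status is preserved. Traversing the chain, $M$ is a gammoid iff $N$ is, so every $\simeq'$-equivalence class is either contained in the class of gammoids or disjoint from it. The one point to be a bit careful about is that $\simeq'$ is to be read as an equivalence on the enlarged ground set $\{G'\mid G'\text{ minor of }G\}\cup\Gcal'\cup\Mcal'\cup\Xcal'$, but since the chain argument is purely about the preservation of gammoidness between consecutive matroids, the enlargement causes no trouble. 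No real obstacle arises; the lemma is essentially a packaging of duality-closure of the gammoid class.
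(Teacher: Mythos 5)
Your proof is correct and follows essentially the same route as the paper's: duality-closure of the gammoid class gives conditions (i)--(iii), with $\Mcal'\setminus\Mcal\subseteq\Xcal'$ handling the intermediates. You additionally verify condition (iv) via an explicit chain argument for the generated relation $\simeq'$, which the paper's (terser) proof leaves implicit; that extra care is welcome but does not change the approach.
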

\begin{proof}\PRFR{Mar 29th}
  The class of gammoids is closed under duality, therefore a matroid $M$ is a gammoid if and only if $M^\ast$ is a gammoid. So $\Gcal'$ and $\Xcal'$ inherit their desired properties of Definition~\ref{def:validTableau} from the validity of $\Tbf$.
  If $M\in \Mcal'\BS \Mcal$, then $M\in \Xcal'$, therefore $M$ cannot be a strict gammoid.
\end{proof}

\begin{definition}\label{def:decisionTableau}\PRFR{Mar 29th}
  Let $\Tbf = (G,\Gcal,\Mcal,\Xcal,\simeq)$ be a decisive matroid tableau. The tableau
  \label{n:concTab}
   $\Tbf! = (G,\Gcal',\Mcal,\Xcal',\simeq)$ shall be the \deftext[conclusion tableau]{conclusion tableau for $\Tbf$} if either
  \begin{enumerate}\ROMANENUM
  \item $\Gcal' = \Gcal \cup \SET{G'~\middle|~ G'\text{~is a minor of~} G}$, $\Xcal' = \Xcal$, and the tableau $\Tbf$ 
  satisfies case {(i)} of Definition~\ref{def:decisiveTableau}; or
  \item $\Gcal' = \Gcal$, $\Xcal' = \Xcal\cup\SET{G}$, and $\Tbf$ satisfies case {(ii)} or {(iii)} of Definition~\ref{def:decisiveTableau}. \qedhere
  \end{enumerate}
\end{definition}

\needspace{2\baselineskip}
\begin{corollary}\PRFR{Mar 29th}
  The derivation of the conclusion tableau is valid.
\end{corollary}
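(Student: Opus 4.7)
The plan is to verify the four defining properties of a valid tableau (Definition~\ref{def:validTableau}) for $\Tbf!$ in each of the two cases of Definition~\ref{def:decisionTableau}, leveraging Lemma~\ref{lem:decisiveTableau} to pin down the gammoid status of the goal $G$. Since $\Mcal$, and indeed the equivalence $\simeq$, are transferred unchanged from the already-valid tableau $\Tbf$, the properties on $\Mcal$ and on equivalence classes are inherited for free. Only the changes to $\Gcal$ and $\Xcal$ require argument.

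In case~(i) of Definition~\ref{def:decisionTableau}, the input $\Tbf$ witnesses case~(i) of Definition~\ref{def:decisiveTableau}, so by Lemma~\ref{lem:decisiveTableau} the goal $G$ is itself a gammoid. Since the class of gammoids is closed under minors (\cite{M72}, Cor.~4.1.3), every minor of $G$ is a gammoid, and therefore the enlarged family $\Gcal' = \Gcal \cup \SET{G'~\middle|~ G' \text{~is a minor of~}G}$ still consists entirely of gammoids. Because $\Xcal' = \Xcal$ is unchanged, its validity condition is preserved.

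In case~(ii), $\Tbf$ satisfies case~(ii) or case~(iii) of Definition~\ref{def:decisiveTableau}. Here I want to apply Lemma~\ref{lem:decisiveTableau} in the negative direction to conclude that $G$ is not a gammoid, so that enlarging $\Xcal$ by $G$ is legitimate. The key observation to spell out is that, given the validity of $\Tbf$, case~(i) of Definition~\ref{def:decisiveTableau} is incompatible with either of the other two: if some $M\in\Gcal$ satisfied $G\simeq M$ then $G$ would be a gammoid, contradicting the existence of a non-gammoid minor (case~(ii), using closure under minors), and also contradicting case~(iii), since by Corollary~\ref{cor:NbrOfV} a gammoid $G$ admits a strict gammoid extension of the prescribed size, which could not lie in $\Mcal$ by Definition~\ref{def:validTableau}(ii). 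Hence in case~(ii) of the conclusion construction, case~(i) of decisiveness fails, so Lemma~\ref{lem:decisiveTableau} yields that $G$ is not a gammoid; consequently $\Xcal' = \Xcal \cup \SET{G}$ still consists of non-gammoids, while $\Gcal' = \Gcal$ is unchanged.

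The main obstacle I anticipate is exactly the mutual-exclusion argument above: one must be careful that in case~(ii) of Definition~\ref{def:decisionTableau} we are genuinely in the situation covered by the negative half of Lemma~\ref{lem:decisiveTableau}, rather than simultaneously in case~(i) of decisiveness (which would make adding $G$ to $\Xcal$ catastrophic). Once this is resolved by the incompatibility observations, the remainder of the proof is a routine bookkeeping of the unchanged components.
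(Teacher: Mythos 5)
Your proof is correct and takes essentially the same route as the paper, which simply records the corollary as an easy consequence of Lemma~\ref{lem:decisiveTableau}; your expansion (closure under minors for case~(i), and the mutual exclusivity of case~(i) with cases~(ii)/(iii) so that the negative direction of the lemma applies) is exactly the bookkeeping the paper leaves implicit.
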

\begin{proof}\PRFR{Mar 29th}
  Easy consequence of Lemma~\ref{lem:decisiveTableau}.
\end{proof}

\begin{definition}\PRFR{Mar 29th}
  Let $\Tbf = (G,\Gcal,\Mcal,\Xcal,\simeq)$  be a matroid tableau, let $M_1$ and $M_2$ be matroids of the
  tableau, i.e.
  \[ \SET{M_1,M_2}\subseteq  \SET{G'~\middle|~ G'\text{~is a minor of~}G}\cup\Gcal \cup \Mcal \cup \Xcal .\]
  Furthermore, let $M_1$ be a deflate of $M_2$.
  The tableau
  \label{n:idTab}
   $$\Tbf(M_1\simeq M_2) = (G,\Gcal,\Mcal,\Xcal,\simeq')$$ is called 
  \deftext[identified tableau]{identified tableau} for $\Tbf$ with respect to $\bm M_1$ and $\bm M_2$ if
  the relation
  $\simeq'$ is the smallest equivalence relation, such that $M_1\simeq' M_2$ holds, and such that  $M'\simeq N'$  implies $M'\simeq' N'$.
\end{definition}

\needspace{2\baselineskip}
\begin{lemma}\PRFR{Mar 29th}
  The derivation of an identified tableau is valid.
\end{lemma}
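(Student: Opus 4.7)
The plan is to verify the four conditions of Definition~\ref{def:validTableau} for $\Tbf(M_1\simeq M_2)$, noting that the only component that differs from $\Tbf$ is the equivalence relation. Since the families $\Gcal$, $\Mcal$, and $\Xcal$ are unchanged, conditions (i), (ii), and (iii) hold immediately from the validity of $\Tbf$. Only condition (iv) requires work.

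For condition (iv), I would analyze the structure of $\simeq'$. Being the smallest equivalence relation that contains $\simeq$ together with the single pair $(M_1,M_2)$, every $\simeq'$-class is either an unchanged $\simeq$-class, or the union $[M_1]_\simeq \cup [M_2]_\simeq$ (in case these were distinct $\simeq$-classes). In the first case, the required dichotomy (either fully contained in the class of gammoids, or containing no gammoid) is inherited directly from the validity of $\Tbf$.

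In the second case, the crucial fact is Lemma~\ref{lem:deflationLemma}: since $M_1$ is a deflate of $M_2$, we have that $M_1$ is a gammoid if and only if $M_2$ is a gammoid. The validity of $\Tbf$ tells us $[M_1]_\simeq$ is either fully gammoid or gammoid-free, and likewise for $[M_2]_\simeq$. Combined with the biconditional from Lemma~\ref{lem:deflationLemma}, both $\simeq$-classes must fall on the same side of the dichotomy, so their union $[M_1]_\simeq\cup[M_2]_\simeq$ again is either fully contained in the class of gammoids or contains no gammoid.

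I do not anticipate any serious obstacle here: the argument is essentially a bookkeeping invocation of Lemma~\ref{lem:deflationLemma}. The only mildly subtle point is being explicit about the structure of the equivalence relation generated by adjoining a single pair to an existing equivalence, but this is a standard fact about equivalence closures.
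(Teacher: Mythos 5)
Your proof is correct and is exactly the argument the paper intends: the paper's own proof is the single line ``Follows from Lemma~\ref{lem:deflationLemma}'', and your write-up simply makes explicit the bookkeeping (conditions (i)--(iii) are unchanged, and the merged class $[M_1]_\simeq\cup[M_2]_\simeq$ respects the gammoid dichotomy because the deflation lemma puts $M_1$ and $M_2$ on the same side). No discrepancy to report.
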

\begin{proof}\PRFR{Mar 29th}
  Follows from Lemma~\ref{lem:deflationLemma}.
\end{proof}

\subsection{Valid Tableaux}

\begin{corollary}\label{cor:strictGammoidTableau}\PRFR{Mar 29th}
  Let $M=(E,\Ical)$ be a matroid with $\alpha_M \geq 0$. Then the matroid tableau
  $\Tbf$ is valid, where
  $\Tbf = (M,\Gcal,\Mcal,\Xcal,\simeq)$ with
  \( \Gcal = \SET{M,M^\ast}\),  $\Mcal= \emptyset$, $\Xcal=\emptyset$, and $M\simeq N \Leftrightarrow M=N$.
\end{corollary}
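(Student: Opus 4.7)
The plan is to verify each of the four clauses of Definition~\ref{def:validTableau} in turn, most of which collapse to triviality given the specific choices in the statement.

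First I would handle clauses (ii) and (iii): since $\Mcal=\emptyset$ and $\Xcal=\emptyset$, both conditions are vacuously satisfied. Clause (iv) is almost as cheap: because $\simeq$ is defined to be the identity relation on the relevant family of matroids, every equivalence class $[N]_\simeq$ is a singleton $\{N\}$, and a singleton is trivially either entirely inside the class of gammoids (if $N$ is a gammoid) or disjoint from it (if $N$ is not). So clause (iv) holds automatically.

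The only real content is clause (i), which requires showing that both $M$ and $M^\ast$ are gammoids. For $M$ itself this is immediate from Theorem~\ref{thm:strictGammoids}: the hypothesis $\alpha_M\geq 0$ says exactly that $M$ is a strict gammoid, and every strict gammoid is in particular a gammoid. For $M^\ast$, I would invoke the fact noted in the introduction that the class of gammoids is closed under duality; since $M$ is a gammoid, so is $M^\ast$.

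There is no real obstacle here; the corollary is essentially a packaging of Theorem~\ref{thm:strictGammoids} together with closure of the gammoid class under duality into the tableau formalism, so the proof is a direct verification.
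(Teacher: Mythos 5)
Your proof is correct and takes the same route as the paper, which simply cites Theorem~\ref{thm:strictGammoids}; you have merely made explicit the vacuous clauses of Definition~\ref{def:validTableau} and the appeal to closure of gammoids under duality for $M^\ast$, both of which the paper leaves implicit.
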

\begin{proof}\PRFR{Mar 29th}
See Theorem~\ref{thm:strictGammoids}.
\end{proof}

\begin{corollary}\PRFR{Mar 29th}
  Let $M=(E,\Ical)$ be a matroid with $\rk_M(X) = 3$, $X\subseteq E$ with $\alpha_M(X) < 0$. Then the matroid tableau
  $\Tbf$ is valid, where
  $\Tbf = (M,\Gcal,\Mcal,\Xcal,\simeq)$ with
  \( \Gcal = \emptyset\),  $\Mcal= \emptyset$, $\Xcal=\SET{M,M^\ast}$, and $M\simeq N \Leftrightarrow M=N$.
\end{corollary}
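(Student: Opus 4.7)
The plan is to verify each of the four conditions of Definition~\ref{def:validTableau}. Conditions (i) and (ii) hold vacuously because $\Gcal$ and $\Mcal$ are empty, and condition (iv) is trivial since $\simeq$ is equality, so every equivalence class is a singleton and is therefore trivially either wholly inside or wholly outside the gammoid class. The real content is condition (iii): we must show that neither $M$ nor $M^\ast$ is a gammoid. Because the class of gammoids is closed under duality, it is enough to argue this for $M$; and because it is closed under minors, it suffices to rule out $M':=M\restrict X$, which has rank exactly $3$.

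The first step is a comparison of $\alpha_{M'}(X)$ with $\alpha_M(X)$. By induction on the flat lattice, for every flat $F$ of $M$ with $F\subseteq X$, the proper flats of $M'$ below $F$ coincide with the proper flats of $M$ below $F$ (since $\cl_M(F')\subseteq \cl_M(F)=F\subseteq X$ for any $F'\subseteq F$), and consequently $\alpha_{M'}(F)=\alpha_M(F)$. The additional flats appearing in $\Fcal(M',X)\setminus \Fcal(M,X)$ all have rank at most $2$, and a direct rank-by-rank calculation shows each of them contributes a non-negative amount to the recurrence: a rank-$0$ flat contributes $|F|\geq 0$, a rank-$1$ flat $F=L\cup c$ contributes $|c|-1\geq 0$, and a rank-$2$ flat contributes $p-2$ with $p\geq 2$ the number of parallel classes on the line. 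Plugging into the recurrence for $\alpha_{M'}(X)$ therefore yields
\[ \alpha_{M'}(X) \;=\; \alpha_M(X) \;-\; \sum_{F\in \Fcal(M',X)\setminus\Fcal(M,X)} \alpha_{M'}(F) \;\leq\; \alpha_M(X) \;<\; 0. \]
Theorem~\ref{thm:strictGammoids} then gives that $M'$ is not a strict gammoid.

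The main obstacle is the last step: promoting ``not a strict gammoid'' to ``not a gammoid'' for the rank-$3$ matroid $M'$. The natural sufficient claim is that every gammoid of rank at most $3$ is already a strict gammoid, so that Mason's inequality in fact characterizes gammoid-ness in this rank range. Granting this, $M'$ fails to be a gammoid, hence $M$ fails to be one by minor-closure, and $M^\ast$ fails to be one by duality-closure; this completes the verification of condition (iii) and hence the validity of $\Tbf$.
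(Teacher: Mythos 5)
Your proof is correct and follows the same route as the paper's: Mason's $\alpha$-criterion for strict gammoids (Theorem~\ref{thm:strictGammoids}), the cited fact that every rank-$3$ gammoid is already strict, and closure of the gammoid class under minors and duality. The only difference is that you explicitly establish the inequality $\alpha_{M\restrict X}(X)\leq\alpha_M(X)$ needed to transfer the hypothesis on $\alpha_M$ to the restriction $M\restrict X$ (via the observation that the extra flats created by restricting have rank at most $2$ and hence non-negative $\alpha$), a step the paper's one-line proof leaves implicit; your verification of that step is sound.
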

\begin{proof}\PRFR{Mar 29th}
Follows from Theorem~\ref{thm:strictGammoids} together with the fact that every gammoid of rank $3$ is
a strict gammoid (\cite{IP73}, Proposition~4.8).
\end{proof}

\begin{remark}\label{rem:nonStrictGammoidTableau}\PRFR{Mar 29th}
  Let $M=(E,\Ical)$ be a matroid, $X\subseteq E$ with $\alpha_M(X) < 0$. Then the matroid tableau
  $\Tbf$ is valid, where
  $\Tbf = (M,\Gcal,\Mcal,\Xcal,\simeq)$ with
  \( \Gcal = \emptyset\),  $\Mcal= \SET{M}$, $\Xcal=\emptyset$, and $M\simeq N \Leftrightarrow M=N$.
\end{remark}

\begin{corollary}\PRFR{Mar 29th}
  Let $M=(E,\Ical)$ be a matroid.
  If $M$ has no minor isomorphic to $M(K_4)$ and no minor isomorphic to $U_{2,4}$, then the matroid tableau
  $\Tbf$ is valid, where
  $\Tbf = (M,\Gcal,\Mcal,\Xcal,\simeq)$ with
  \( \Gcal = \SET{M,M^\ast}\),  $\Mcal= \emptyset$, $\Xcal=\emptyset$,  and $M\simeq N \Leftrightarrow M=N$.
\end{corollary}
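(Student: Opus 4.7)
The plan is to show that both $M$ and $M^\ast$ are gammoids; once that is done, the remaining validity conditions of Definition~\ref{def:validTableau} hold trivially, because $\Mcal=\Xcal=\emptyset$ and every equivalence class of equality is a singleton, so condition (iv) is automatically satisfied (a singleton $\{K\}$ either consists of a gammoid or contains no gammoid).

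First I would handle $M$ itself. The hypothesis that $M$ has no $U_{2,4}$-minor together with Theorem~\ref{thm:binaryMatroids} yields a matrix $\mu\in\Fbb_2^{E\times C}$ with $M\isomorph M(\mu)$. Combining this with the further hypothesis that $M$ has no $M(K_4)$-minor puts us into the situation of Theorem~\ref{thm:graphicGammoidsAreSeriesParallel}, whose equivalent formulation guarantees that $M(\mu)$, and hence $M$, is a gammoid.

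To get the same conclusion for $M^\ast$ I would invoke the standard interplay between duality and taking minors: the dual of a minor of $M$ is a minor of $M^\ast$, and vice versa. Since both $U_{2,4}$ and $M(K_4)$ are self-dual (indeed $U_{r,n}^\ast = U_{n-r,n}$ so $U_{2,4}^\ast \isomorph U_{2,4}$, and $M(K_4)$ is the cycle matroid of the planar self-dual graph $K_4$, hence $M(K_4)^\ast\isomorph M(K_4)$), any $U_{2,4}$ or $M(K_4)$ minor of $M^\ast$ would give, upon dualising, a minor of $M$ of the same isomorphism type, contradicting the hypothesis. Thus $M^\ast$ also has neither a $U_{2,4}$- nor an $M(K_4)$-minor, and the argument of the previous paragraph applied to $M^\ast$ shows that $M^\ast$ is a gammoid.

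The only mildly delicate point is the self-duality of $M(K_4)$; everything else is a routine application of the two cited classification theorems together with the fact that the validity conditions degenerate when $\Mcal$, $\Xcal$ and the nontrivial $\simeq$-classes are empty.
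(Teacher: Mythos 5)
Your argument is correct and follows the same route as the paper, which simply cites Theorems~\ref{thm:binaryMatroids} and \ref{thm:graphicGammoidsAreSeriesParallel} as a direct consequence; your expansion of the chain (no $U_{2,4}$-minor $\Rightarrow$ binary, no $M(K_4)$-minor $\Rightarrow$ gammoid) is exactly what is intended. Your treatment of $M^\ast$ via the self-duality of $U_{2,4}$ and $M(K_4)$ is fine, though it could be shortened by simply invoking the closure of the class of gammoids under duality, which the paper states in the introduction.
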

\begin{proof}\PRFR{Mar 29th}
Direct consequence of Theorems~\ref{thm:graphicGammoidsAreSeriesParallel} and \ref{thm:binaryMatroids}.
\end{proof}

\begin{corollary}\PRFR{Mar 29th}
  Let $M=(E,\Ical)$ be a matroid, $B_1,B_2\in \Bcal(M)$ be bases of $M$ such that for every bijection
  $\phi\colon B_1\BS B_2 \maparrow B_2\BS B_1$ there is a set $X\subseteq B_1\BS B_2$
  with the property $ \left( B_1\BS X \right)\cup \phi[X] \notin \Bcal(M)$. Then the matroid tableau
  $\Tbf$ is valid, where
  $\Tbf = (M,\Gcal,\Mcal,\Xcal,\simeq)$ with
  \( \Gcal = \emptyset\),  $\Mcal= \emptyset$, $\Xcal=\SET{M,M^\ast}$, and $M\simeq N \Leftrightarrow M=N$.
\end{corollary}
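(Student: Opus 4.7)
The plan is to show that $M$ is not strongly base-orderable; then by Lemma~\ref{lem:GammoidsAreStronglyBaseOrderable} it is not a gammoid, and by closure of the class of gammoids under duality (stated in the introduction) $M^\ast$ is not a gammoid either. Validity of $\Tbf$ then follows immediately from Definition~\ref{def:validTableau}, since $\Gcal$ and $\Mcal$ are empty and $\simeq$ is the identity relation, which trivially satisfies condition {\em (iv)}.

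The non-trivial point is that the hypothesis quantifies only over bijections $\phi\colon B_1\BS B_2\maparrow B_2\BS B_1$, while strong base-orderability asks for a bijection $\psi\colon B_1\maparrow B_2$. So I would first prove the following reduction: if there exists a bijection $\psi\colon B_1\maparrow B_2$ such that $(B_1\BS Y)\cup\psi[Y]\in\Bcal(M)$ for all $Y\subseteq B_1$, then there also exists a bijection $\phi_\psi\colon B_1\BS B_2\maparrow B_2\BS B_1$ with $(B_1\BS X)\cup\phi_\psi[X]\in\Bcal(M)$ for all $X\subseteq B_1\BS B_2$. The contrapositive of this reduction, combined with the hypothesis, yields that $M$ is not strongly base-orderable.

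The construction of $\phi_\psi$ goes as follows. For $a\in B_1\BS B_2$, set $\phi_\psi(a)=\psi^{k(a)}(a)$, where $k(a)$ is the least positive integer with $\psi^{k(a)}(a)\notin B_1$; such an iterate exists because the orbit $a,\psi(a),\psi^2(a),\ldots$ consists of distinct elements of $B_1$ as long as it stays in $B_1$, and $B_1$ is finite. One checks that $\phi_\psi$ is well-defined and injective: if $\psi^{k(a)}(a)=\psi^{k(a')}(a')$ with $k(a)\leq k(a')$, injectivity of $\psi$ forces $a=\psi^{k(a')-k(a)}(a')\in B_2$ unless the exponent is zero, contradicting $a\in B_1\BS B_2$; so $k(a)=k(a')$ and $a=a'$. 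Surjectivity follows from $\left|B_1\BS B_2\right|=\left|B_2\BS B_1\right|$.

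Next, for $X\subseteq B_1\BS B_2$ set $Y=\bigcup_{a\in X}\{a,\psi(a),\ldots,\psi^{k(a)-1}(a)\}$. Using that the ``tails'' $\{a,\psi(a),\ldots,\psi^{k(a)-1}(a)\}$ for distinct $a\in X$ are pairwise disjoint (a straightforward application of the injectivity argument above, using $a,a'\in B_1\BS B_2$), a telescoping cancellation shows
\[ (B_1\BS Y)\cup\psi[Y] \;=\; (B_1\BS X)\cup\phi_\psi[X], \]
because each intermediate element $\psi^i(a)$ with $1\leq i\leq k(a)-1$ lies in $B_1\cap B_2$ and is both removed (it is in $Y$) and re-added (it is in $\psi[Y]$). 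Hence if $\psi$ witnesses strong base-orderability for the pair $B_1,B_2$, then $\phi_\psi$ witnesses the stronger-looking statement on the symmetric difference. The main obstacle is simply organizing this telescoping computation and the disjointness verifications cleanly; everything else is routine.
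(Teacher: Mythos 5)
Your proof is correct and takes essentially the same route as the paper, whose entire proof is to cite Lemma~\ref{lem:GammoidsAreStronglyBaseOrderable} (gammoids are strongly base-orderable) and, implicitly, closure of the class of gammoids under duality for placing $M^\ast$ in $\Xcal$. The orbit construction you give to convert a witnessing bijection $\psi\colon B_1\maparrow B_2$ into one on $B_1\BS B_2\maparrow B_2\BS B_1$ is sound (well-definedness, injectivity, disjointness of the tails, and the telescoping identity all check out) and carefully fills in a standard reduction that the paper treats as immediate.
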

\begin{proof}\PRFR{Mar 29th}
  Direct consequence of Lemma~\ref{lem:GammoidsAreStronglyBaseOrderable}.
\end{proof}

\subsection{Example}\label{ex:tab}

  Consider the matroid $G = G_{8,4,1} = (E,\Ical)$ where $E=\SET{1,2,\ldots,8}$ and where $\Ical = \SET{\vphantom{A^A}X\subseteq E ~\middle|~ \left| X \right|\leq 4,\,X\notin \Hcal}$ with
  \[\Hcal = \SET{\vphantom{A^A} \SET{1, 3, 7, 8},
  \SET{1, 5, 6, 8},
   \SET{2, 3, 6, 8},
  \SET{4, 5, 6, 7},
  \SET{2, 4, 7, 8}}.\]
  Clearly, $\alpha_G(H) = 1$ for all $H\in \Hcal$, and consequently $\alpha_G(E) = 4-5 = -1$.
  The dual matroid $G^\ast = (E,\Ical^\ast)$ has 
  $\Ical^\ast = \SET{\vphantom{A^A}X\subseteq E ~\middle|~ \left| X \right|\leq 4,\,X\notin \Hcal^\ast}$ with
  \[\Hcal^\ast = \SET{\vphantom{A^A} \SET{1, 2, 3, 8},
  \SET{1, 3, 5, 6},
   \SET{1, 4, 5, 7},
  \SET{2, 3, 4, 7},
  \SET{2, 4, 5, 6}}.\]
  Thus $\alpha_{G^\ast}(H') = 1$ for all $H'\in\Hcal^\ast$, and so $\alpha_{G^\ast}(E) = 4-5 = -1$, too.
   We start with the
  valid tableaux $$\Tbf_G = \left( G,\emptyset,\SET{G},\emptyset,\langle \, \rangle \right)
  \txtand \Tbf_{G^\ast} = \left( G^\ast,\emptyset,\SET{G^\ast},\emptyset,\langle \, \rangle \right),$$
  where $\langle .\rangle$ denotes the generated equivalence relation defined on the set of matroids occurring in the respective tableau.
  We may derive the extended joint tableau 
  $$ \Tbf_1 = [\Tbf_G \cup \Tbf_{G^\ast}]_\equiv = \left( G, \emptyset,\SET{G,G^\ast}, \emptyset, \langle G\simeq G^\ast\rangle  \right).$$
  Now observe that although $G$ is deflated, $G^\ast$ is not deflated. We have
     \begin{align*}C^\ast_8 & = \SET{\vphantom{A^A}F\in \Fcal\left( G^\ast\restrict \SET{1,2,\ldots,7}  \right)~\middle|~ 8 \in \cl_{G^\ast}(F)}
     \\ & = \SET{\vphantom{A^A}F\in \Fcal\left( G^\ast\restrict \SET{1,2,\ldots,7}  \right)~\middle|~\SET{1,2,3}\subseteq F}.
  \end{align*}
  Let $G^\ast_7 = G^\ast\restrict \SET{1,2,\ldots,7}$. We have $\alpha_{G^\ast_7}(\SET{1,2,\ldots,7}) = -1$, thus $G^\ast_7$ is not a 
  strict gammoid, and thus
  \[ \Tbf_{G^\ast_7} = \left(G^\ast_7, \emptyset, \SET{G^\ast_7}, \emptyset, \langle\,\rangle\right) \]
  is a valid tableau. Since $G^\ast_7$ is a deflate of $G^\ast$, each of them is an induced matroid with respect to the other. Therefore
  we may identify $G^\ast$ and $G^\ast_7$ in the joint tableau
  \[ \Tbf_2 = \left( \Tbf_1\cup \Tbf_{G^\ast_7} \right)(G^\ast\simeq G^\ast_7) = \left( G,\emptyset,\SET{G,G^\ast,G^\ast_7},\emptyset,\langle G \simeq G^\ast \simeq G^\ast_7\rangle \right).\]
  Now let $G_7 = \left( G^\ast_7 \right)^\ast$, and we have $\alpha_{G_7} \geq 0$. Thus
  \[ \Tbf_{G_7} = \left( G_7, \SET{G_7},\emptyset, \emptyset, \langle\,\rangle \right) \]
  is a valid tableau. We now may derive the decisive tableau
  \[ \Tbf_3 = [\Tbf_2 \cup \Tbf_{G_7}]_\equiv = \left( G, \SET{G_7}, \SET{G,G^\ast,G^\ast_7,G_7},\emptyset, \langle G\simeq G^\ast \simeq G^\ast_7 \simeq G_7\rangle \right) \]
  where case {\em (i)} of Definition~\ref{def:decisiveTableau} holds. Consequently, $G$ is a gammoid.

\begin{figure}
\includegraphics[width=\textwidth]{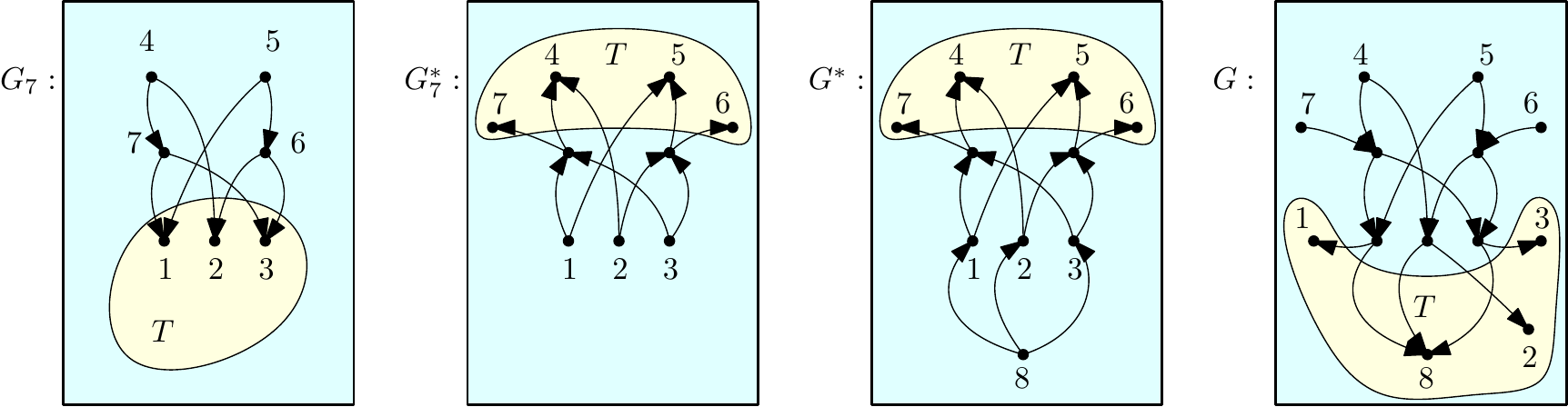}
\caption{\label{fig:tab}Reconstruction of a representation of $G_{8,4,1}$ from the matroid tableaux in Section~\ref{ex:tab}.}
\end{figure}

\section{Decision Procedure}

We may start the procedure with the valid initial tableau $\Tbf := (G,\emptyset,\emptyset,\emptyset,\langle\,\rangle)$,
or any other valid tableau that we may have obtained using some heuristic or intuitive derivation. For instance, if $\Tbf'$ is
a tableau obtained for a different goal $G'$, then the joint tableau $\Tbf := (G,\emptyset,\emptyset,\emptyset,\langle\,\rangle) \cup \Tbf'$
may be a better choice to start with, because $G$ and $G'$ may have common extensions and minors (up to isomorphy).

\begin{step}\label{step:start}\PRFR{Mar 31st}
  If $\Tbf$ is decisive, stop. If the procedure is run in a parallel fashion, you may choose to end
  a spawned thread here as long as there is another thread that carries on the computation.
\end{step}

\begin{step}\label{step:branchOut}\PRFR{Mar 31st}
  Choose an intermediate goal
  $M\in  \SET{G' ~\middle|~ G'\text{~is a minor of~}G} \cup \Mcal  $ such that $M\notin \Gcal \cup \Xcal$,
  preferably one with $M\simeq G$ which is small both in rank and cardinality.
  At this point, it is possible to spawn several parallel computations with multiple choices of $M$. 
  In this case, all subsequent updates of $\Tbf$ shall be considered atomic and synchronized.
\end{step}

\begin{step}\PRFR{Mar 31st}
  If $\Tbf_M = (M,\emptyset,\emptyset,\emptyset,\langle\,\rangle)\cup \Tbf$ is decisive,
  then set $\Tbf := \left[\vphantom{A^A}[\Tbf \cup \left( \Tbf_M!  \right)]_\equiv\right]_\simeq$
   and continue with Step~\ref{step:start}.
\end{step}

\begin{step}\label{step:choice}
  Determine whether $M$ has a minor that is isomorphic to $M(K_4)$. If this is the case,  then 
  $\Tbf_M = \left( M, \emptyset, \emptyset, \SET{M,M^\ast}, \langle\,\rangle  \right)$ is valid, we set
   $\Tbf := \left[\vphantom{A^A}[\Tbf\cup \Tbf_M]_\equiv \right]_\simeq$ and then
  continue with Step~\ref{step:start}.
\end{step}

 Since $M(K_4) = \left( M(K_4) \right)^\ast$, we have that $M(K_4)$ is neither a minor of $M$ nor of $M^\ast$
when reaching the next step.

\begin{step}\label{step:chosen}\PRFR{Mar 31st}
  Determine whether $M$ has a minor that is isomorphic to $U_{2,4}$. If this is not the case,  then 
  $\Tbf_M = \left( M, \SET{M,M^\ast}, \emptyset, \emptyset,  \langle\,\rangle  \right)$ is valid, we set
   $\Tbf := \left[\vphantom{A^A}[\Tbf\cup \Tbf_M]_\equiv \right]_\simeq$ and then
  continue with Step~\ref{step:start}.
\end{step}

 Since $U_{2,4} = \left( U_{2,4} \right)^\ast$, we have that $U_{2,4}$ is neither a minor of $M$ nor of $M^\ast$
when reaching the next step.

\begin{step}\PRFR{Mar 31st}
  If $M\in \Mcal$, continue immediately with Step~\ref{step:nonStrict}.
  If $\alpha_M \geq 0$, then 
$\Tbf_M = \left( M, \SET{M,M^\ast}, \emptyset, \emptyset,  \langle\,\rangle  \right)$ is valid, so we may set
   $\Tbf := \left[\vphantom{A^A}[\Tbf\cup \Tbf_M]_\equiv \right]_\simeq$ and
  continue with Step~\ref{step:start}.
\end{step}

\begin{step}\label{step:nonStrict}\PRFR{Mar 31st}
  If $M^\ast\in \Mcal$, continue immediately with Step~\ref{step:baseorderable}.
  If $\alpha_{M^\ast} \geq 0$, then
$\Tbf_{M^\ast} = \left( M^\ast, \SET{M,M^\ast}, \emptyset, \emptyset,  \langle\,\rangle  \right)$ is valid, so we may set
   $\Tbf := \left[\vphantom{A^A}[\Tbf\cup \Tbf_{M^\ast}]_\equiv \right]_\simeq$ and
  continue with Step~\ref{step:start}.
\end{step}

\begin{step}\label{step:baseorderable}\PRFR{Mar 31st}
  Determine whether $M$ is strongly base-orderable. If this is not the case, then 
  $\Tbf_M = \left( M, \emptyset, \emptyset, \SET{M,M^\ast}, \langle\,\rangle  \right)$ is valid, we set
   $\Tbf := \left[\vphantom{A^A}[\Tbf\cup \Tbf_M]_\equiv \right]_\simeq$ and then
  continue with Step~\ref{step:start}.
\end{step}

 The class of strong base-orderable matroids is closed under duality and minors \cite{Ingleton1971}, therefore $M^\ast$ and
all minors of $M$ and $M^\ast$ are strongly base-orderable upon reaching the next step.

\begin{step}\PRFR{Mar 31st}
  Let $M=(E,\Ical)$. Determine whether there is some $X\in \Ical$ with $\left| X \right| = \rk_M(E) - 3$
  and some $Y\subseteq E\BS X$ such that $\alpha_{M\contract\left( E\BS X \right)}(Y) < 0$. If this is the
  case, then the tableau 
  $\Tbf_M = \left( M, \emptyset, \emptyset, \SET{M,M^\ast}, \langle\,\rangle  \right)$ is valid, so we may set
   $\Tbf := \left[\vphantom{A^A}[\Tbf\cup \Tbf_M]_\equiv \right]_\simeq$ and then
  continue with Step~\ref{step:start}.
\end{step}

\begin{step}\PRFR{Mar 31st}
  Let $M^\ast=(E,\Ical^\ast)$. Determine whether there is some $X\in \Ical^\ast$ with \linebreak 
   $\left| X \right| = \rk_{M^\ast}(E) - 3$
  and some $Y\subseteq E\BS X$ such that $\alpha_{M^\ast\contract\left( E\BS X \right)}(Y) < 0$. If this is the
  case, then the tableau 
  $\Tbf_{M^\ast} = \left( M^\ast, \emptyset, \emptyset, \SET{M,M^\ast}, \langle\,\rangle  \right)$ is valid, we may set
   $\Tbf := \left[\vphantom{A^A}[\Tbf\cup \Tbf_{M^\ast}]_\equiv \right]_\simeq$ and then
  continue with Step~\ref{step:start}.
\end{step}

\begin{step}\PRFR{Mar 31st}
  Determine whether $M$ is deflated. If not, then find a deflate $N$ of $M$ with a ground set of minimal cardinality,
  set $\Tbf := \left[\vphantom{A^A}\left[\left( \Tbf\cup\Tbf_N \right)(M \simeq N)\right]_\equiv \right]_\simeq$
  where $$\Tbf_N = \begin{cases}[r]
     \left( N,\SET{N,N^\ast},\emptyset,\emptyset,\langle\,\rangle \right)& \quad \text{if~}\alpha_N \geq 0,\\
      \left( N,\emptyset,\SET{N},\emptyset,\langle\,\rangle \right) & \quad \text{otherwise,}
      \end{cases}$$ 
    and continue with Step~\ref{step:start}.
\end{step}

\begin{step}\PRFR{Mar 31st}\label{lastStep}
  Determine whether $M^\ast$ is deflated. If not, then find a deflate $N$ of $M^\ast$ with a ground set of minimal cardinality,
  set $\Tbf := \left[\vphantom{A^A}\left[\left( \Tbf\cup\Tbf_N \right)(M^\ast \simeq N)\right]_\equiv \right]_\simeq$
  where $$\Tbf_N = \begin{cases}[r]
     \left( N,\SET{N,N^\ast},\emptyset,\emptyset,\langle\,\rangle \right)& \quad \text{if~}\alpha_N \geq 0,\\
      \left( N,\emptyset,\SET{N},\emptyset,\langle\,\rangle \right) & \quad \text{otherwise,}
      \end{cases}$$ 
    and continue with Step~\ref{step:start}.
\end{step}

\needspace{4\baselineskip}
\begin{step}\label{step:exhaustion}\PRFR{Mar 31st}
  Try to find an extension $N$ of $M$ with at most $\rk_G(E)^2\cdot \left| E \right| + \rk_G(E) + \left| E \right|$ elements
  such that $N$ is not isomorphic to any $M'\in \Gcal \cup \Mcal \cup \Xcal$.
  Set $\Tbf := \left[\vphantom{A^A}\left[\Tbf\cup\Tbf_N\right]_\equiv \right]_\simeq$
  where $$\Tbf_N = \begin{cases}[r]
     \left( N,\SET{N,N^\ast},\emptyset,\emptyset,\langle\,\rangle \right)& \quad \text{if~}\alpha_N \geq 0,\\
      \left( N,\emptyset,\SET{N},\emptyset,\langle\,\rangle \right) & \quad \text{otherwise,}
      \end{cases}$$ 
    and continue with Step~\ref{step:start}, or repeat this step and add multiple extensions of $M$.
  If no such extension of $M$ exists, then set $M:= G$ and continue with Step~\ref{step:chosen}.
\end{step}

\PRFR{Mar 31st}
 Clearly, if we continue this process long enough, then Step~\ref{step:exhaustion} ensures 
that the tableau $\Tbf$ will eventually
become decisive for $G$ by exhausting all isomorphism classes of extensions of $G$
with at most $\rk_G(E)^2\cdot \left| E \right| + \rk_G(E) + \left| E \right|$ elements.
If the procedure is carried out in a parallel fashion, not all spawned threads have to carry out Step~\ref{step:exhaustion},
as long as it is guaranteed that the step is carried out again and again eventually by some threads.


\bigskip
\footnotesize
\noindent\textit{Acknowledgments.}
This research was partly supported by a scholarship granted by the FernUniversität in Hagen.


\bibliographystyle{plain}

\bibliography{references.bib}

\end{document}